\newtheorem{lem}{Lemma}[section]
\newtheorem{thm}[lem]{Theorem}
\numberwithin{equation}{section}
\newtheorem*{cor*}{Corollary}
\newtheorem*{thm*}{Theorem}
\theoremstyle{definition}
\newtheorem{defi}{Definition}[section]
\theoremstyle{remark}
\newcommand{\N}{\mathbb{N}}
\newcommand{\Z}{\mathbb{Z}}
\newcommand{\Q}{\mathbb{Q}}
\newcommand{\R}{\mathbb{R}}
\renewcommand{\lvert}{\left\vert}
\renewcommand{\rvert}{\right\vert}
\newcommand{\abs}[1]{\left| #1 \right|}
\newcommand{\norm}[1]{\left\| #1 \right\|}
\newcommand{\floor}[1]{\left\lfloor #1 \right\rfloor}
\title[On Finite Pseudorandom Binary Sequences]{On Finite Pseudorandom Binary
  Sequences: \\Functions from a Hardy field}
\author[M. G. Madritsch]{Manfred G. Madritsch}
\address[M. G. Madritsch]{
  Université de Lorraine, CNRS, IECL, F-54000 Nancy, France}
\email{manfred.madritsch@univ-lorraine.fr}
\author[J. Rivat]{Jo\"el Rivat}
\address[J. Rivat]{
  Universit\'e d'Aix-Marseille\\
  Institut Universitaire de France\\
  Institut de Math\'ematiques de Marseille\\
  CNRS UMR 7373\\
  163, avenue de Luminy, Case 907\\
  13288 MARSEILLE Cedex 9, France.}
\email{joel.rivat@univ-amu.fr}
\author[R. F. Tichy]{Robert F. Tichy}
\address[R. F. Tichy]{
  Graz University of Technology, Institute of Analysis and Number
  Theory, 8010~Graz, Austria}
\email{tichy@tugraz.at}
\subjclass[2020]{Primary 11K45; Secondary 11K06, 11K36, 11J71}
\keywords{pseudorandom, binary sequence, Hardy field, well-distribution, correlation}
\date{\today}
\begin{document}

\begin{abstract}
  We provide a construction of binary pseudorandom sequences based on
  Hardy fields $\mathcal{H}$ as considered by Boshernitzan.  In
  particular we give upper bounds for the well distribution measure
  and the correlation measure defined by Mauduit and Sárközy. Finally
  we show that the correlation measure of order $s$ is small only if
  $s$ is small compared to the ``growth exponent'' of $\mathcal{H}$.
\end{abstract}

\maketitle

\section{Introduction}
A pseudorandom binary sequences is a sequence over $\{-1,+1\}$ which
is generated by a ``simple algorithm'' such that it behaves
randomly. By behaving randomly we mean that the sequences is
indistinguishable from a real random sequence.

The simple algorithm is based on the following construction. First we
define $\chi(x)$ by
\begin{gather}\label{mani:Xi}
\chi(x)=\begin{cases}
+1 &\text{for $0\leq\{x\}<1/2$,}\\
-1 &\text{for $1/2\leq\{x\}<1$,}
\end{cases}
\end{gather}
where $\{\cdot\}$ denotes the fractional part. Then for a given
function $f$ and for integers $N\geq1$ we define the associated binary
sequence of length $N$ by $E_N=E_N(f)=\{e_1,\ldots,e_N\}$, where
$e_n=\chi(f(n))$.

In order to show that this construction yields in fact a pseudorandom
sequence we introduce quantities, measuring the quality of its
randomness. In the present paper we want to focus on two measures,
which were introduced by Mauduit and Sárközy in Part I
\cite{mauduit_sarkoezy1997:finite_pseudorandom_binary}:
\begin{itemize}
\item \textbf{Well-distribution measure:} For a real random sequence
  we expect that we have no arithmetic progression with many equal
  values. In particular, for a binary sequence $E_N$, $b\in\Z$ and
  $a,M\in\N$ with $1\leq a+b\leq aM+b\leq N$ we set
\[
  U(E_N,M,a,b):=\sum_{n=1}^M e_{an+b}.
\]
Then the \textit{well-distribution measure} of $E_N$ is defined as
\[
  W(E_N)
  =\max_{a,b,M}\lvert U(E_N,M,a,b)\rvert
  =\max_{a,b,M}\lvert\sum_{n=1}^M e_{an+b}\rvert,
\]
where the maximum is taken over all $a$, $b$ and $M$ such that $b\in\Z$,
$a,M\in\N$ with $1\leq a+b\leq aM+b\leq N$.
\item \textbf{Correlation measure:} Another property of a real random
  sequence is the lack of correlation between its elements. To this
  end let $s\geq2$ be a positive integer and $E_N$ be a binary
  sequence. For a given positive integer $M$ and a given correlation
  vector $\mathbf{d}=(d_1,\ldots,d_s)\in\N^s$ such that
  $0\leq d_1<d_2<\cdots< d_s\leq N-M$ we set
\[
  V(E_N,M,\mathbf{d})=\sum_{n=1}^Me_{n+d_1}e_{n+d_2}\cdots e_{n+d_s}.
\]
Note that we have supposed that $s\geq2$ since the case $s=1$
corresponds to $U(E_N,M+d_1,1,d_1)$.

Then the \textit{correlation measure of order $s$ of $E_N$} is defined as
\[
  C_s(E_N)=\max_{M,\mathbf{d}}\lvert V(E_N,M,\mathbf{d})\rvert
  =\max_{M,\mathbf{d}}\lvert \sum_{n=1}^Me_{n+d_1}e_{n+d_2}\cdots e_{n+d_s}\rvert,
\]
where the maximum is taken over all $M$ and $\mathbf{d}\in\N^s$ such that
$0\leq d_1<d_2<\ldots<d_s$ and $M+d_s\leq N$.
\end{itemize}

Both measures are in connection with the distribution modulo $1$ of the sequence
$(f(n))_{n\geq1}$. For example consider the well-distribution measure for a
fixed $E_N$, $a$, $b$ and $x$.
We introduce the indicator of a condition $\mathcal{C}$:
$\mathbf{1}_{\mathcal{C}} = 1$ if condition $\mathcal{C}$ is
satisfied and $\mathbf{1}_{\mathcal{C}} = 0$ otherwise.
Then by the definition of $\chi$ we have
\begin{align*}
  \abs{\sum_{an+b\leq x}e_{an+b}}
  &\leq\abs{\sum_{an+b\leq x}\left(\mathbf{1}_{\{f(an+b)\}<\tfrac12}-\frac12\right)}
  +\abs{\sum_{an+b\leq x}\left(\mathbf{1}_{\{f(an+b)\}\geq\tfrac12}-\frac12\right)}\\
  &\leq 2MD_M(x_1,\ldots,x_M),
\end{align*}
where $M=\left\lfloor\frac{t-b}{a}\right\rfloor$, $x_n=f(an+b)$ for $1\leq n\leq
M$ and $D_M$ is the discrepancy defined in Definition~\ref{def:discrepancy}
below. Therefore a discrepancy estimate for the sequence $(f(an+b))_{n\geq1}$
gives rise to an estimate for the well-distribution measure. 

In the present paper we concentrate on polynomial-like sequences. On the one
hand in two papers Mauduit and Sárközy \cite{
mauduit_sarkoezy2000:finite_pseudorandom_binary,mauduit_sarkoezy2000:finite_pseudorandom_binary2}
investigated monomials $f(x)=\alpha x^k$ with integer $k\geq1$ and irrational
$\alpha\in\R\setminus \Q$ with bounded partial coefficients. This means that
there exists $K\geq1$ such that
\begin{gather}\label{eq:bounded_coefficients}
  \alpha=[a_0;a_1,a_2,\ldots],\quad a_i\leq K\text{ for }i\geq1.
\end{gather}
In particular, they showed the following theorem.
\begin{thm*}[{\cite[Theorems 1 and
  2]{mauduit_sarkoezy2000:finite_pseudorandom_binary2}}] Let $k,\ell$ be
  positive integers such that $k\geq3$ and $2\ell+1\leq k$. Furthermore let
  $\alpha$ satisfy \eqref{eq:bounded_coefficients} with some $K\in\R$. Then
  there exists $\sigma_k$ depending on $k$ only such that for all $\varepsilon$,
  there is $N_0=N_0(K,k,\varepsilon)$ such that if $N>N_0$, then
  \[ W(E_N)<N^{1-1/\sigma_k+\varepsilon}
  \quad\text{and}\quad
  C_\ell(E_N)<N^{1-1/\sigma_k+\varepsilon},\]
  where $E_N=E_N(\alpha n^k)$.
\end{thm*}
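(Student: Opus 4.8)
The plan is to reduce both measures to estimates for Weyl sums attached to the polynomial $f(x)=\alpha x^{k}$ and then to exploit the bounded partial quotients of $\alpha$ through Weyl's inequality. Write $\mathrm{e}(t)=\exp(2\pi\mathrm{i}t)$. The function $\chi$ is the square wave of period $1$, so it has mean zero and a Fourier expansion $\chi(x)=\sum_{h\ \mathrm{odd}}c_{h}\,\mathrm{e}(hx)$ with $c_{0}=0$ and $|c_{h}|\ll|h|^{-1}$. For the well-distribution measure one does not even need this, since the excerpt already bounds $|U(E_N,M,a,b)|$ by $2M\,D_M(x_1,\dots,x_M)$ with $x_n=f(an+b)$, and the Erd\H{o}s--Tur\'an inequality converts the discrepancy into a sum of Weyl sums.

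For the well-distribution measure I would start from $D_M\ll H^{-1}+M^{-1}\sum_{h=1}^{H}h^{-1}\bigl|\sum_{n=1}^{M}\mathrm{e}\bigl(h\alpha(an+b)^{k}\bigr)\bigr|$. Each inner sum is a Weyl sum for a polynomial of degree $k$ in $n$ with leading coefficient $h a^{k}\alpha$. Since \eqref{eq:bounded_coefficients} makes $h a^{k}\alpha$ badly approximable with an explicit constant, its continued-fraction convergents have denominators comparable on a logarithmic scale, so one may choose a denominator $q$ of the size that is optimal for Weyl's inequality; this yields a saving of shape $M^{1-1/2^{k-1}+\varepsilon}$. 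The only delicate point is the size of $a$: when $a$ is large the leading coefficient is large, but then $M=\lfloor(t-b)/a\rfloor$ is correspondingly small and the trivial bound $|U|\le M$ suffices. Balancing the two regimes gives $W(E_N)\ll N^{1-1/\sigma_k+\varepsilon}$ with $\sigma_k\asymp 2^{k-1}$.

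For the correlation measure I would expand the product $\prod_{i=1}^{\ell}\chi(\alpha(n+d_i)^{k})$ by the Fourier series (or, to control the Gibbs phenomenon at the discontinuities of $\chi$, by a Vaaler trigonometric majorant/minorant), reducing $V(E_N,M,\mathbf d)$ to
\[
\sum_{0<|h_i|\le H}\Bigl(\prod_{i=1}^{\ell}c_{h_i}\Bigr)\sum_{n=1}^{M}\mathrm{e}\!\left(\alpha\sum_{i=1}^{\ell}h_i(n+d_i)^{k}\right)+O(\text{tail}).
\]
The coefficient of $n^{k-j}$ in the inner phase is $\alpha\binom{k}{j}\sigma_j$ with $\sigma_j=\sum_i h_i d_i^{j}$. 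The structural heart of the argument is a Vandermonde lemma: since the $d_i$ are distinct, the matrix $(d_i^{j})_{0\le j\le\ell-1}$ is invertible, so for any non-zero $\mathbf h$ at least one of $\sigma_0,\dots,\sigma_{\ell-1}$ is non-zero; hence the inner polynomial has degree $d=k-j_0\ge k-\ell+1$, where $j_0$ is the least index with $\sigma_{j_0}\ne0$. Its leading coefficient is $\alpha$ times the non-zero integer $\binom{k}{j_0}\sigma_{j_0}$, so bounded partial quotients again make it badly approximable and Weyl applies. Summing the saving against the (up to logarithms) convergent weights $\prod_i|h_i|^{-1}$ and optimizing $H$ produces $C_\ell(E_N)\ll N^{1-1/\sigma_k+\varepsilon}$.

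I expect the main obstacle to be making the Weyl estimate uniform in the frequencies $h_i$ and the shifts $d_i$. The integer multiplier $\sigma_{j_0}=\sum_i h_i d_i^{j_0}$ can be as large as a power $N^{\ell-1}$, which degrades the badly-approximable constant of $\alpha\binom{k}{j_0}\sigma_{j_0}$ and enters the $q/M^{d}$ term of Weyl's inequality; the hypothesis $2\ell+1\le k$ is precisely what guarantees, with the $\varepsilon$-room to spare, that this multiplier is dominated by $M^{d}\ge M^{k-\ell+1}\approx N^{k-\ell+1}$, so that a genuine power saving survives. Handling the truncation error of the non-smooth $\chi$ uniformly across the $\ell$ factors, and choosing $\sigma_k$ (of order $2^{k-1}$, or polynomial in $k$ if Weyl is replaced by Vinogradov's mean value theorem) so that the well-distribution and the correlation estimates share the same exponent, are the remaining technical tasks.
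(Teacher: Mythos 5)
This statement is not proved in the paper at all: it is quoted, with a citation, from Mauduit and S\'ark\"ozy \cite{mauduit_sarkoezy2000:finite_pseudorandom_binary2} and appears in the introduction purely as background for the Hardy-field results that the paper actually proves. So there is no internal proof to compare your argument against. Judged on its own terms, your sketch follows the standard route of the cited source, which is also structurally parallel to how the present paper proves Theorems~\ref{thm:well-distribution} and~\ref{thm:correlation_measure_bound}: reduce $W(E_N)$ and $C_\ell(E_N)$ to exponential sums via Erd\H{o}s--Tur\'an, Koksma--Sz\H{u}sz and a Vaaler-type expansion of $\chi$; use the Vandermonde nonvanishing of the forms $\sigma_j=\sum_i h_i d_i^{j}$ to identify the true degree $k-j_0\geq k-\ell+1$ of the phase polynomial; and exploit the Diophantine hypothesis on $\alpha$ in Weyl's inequality. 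The difference from the present paper is only in the final exponential-sum input (Weyl's inequality for genuine polynomials with badly approximable leading coefficient, versus Kusmin--Landau and van der Corput estimates driven by derivative bounds for Hardy-field functions), which is forced by the different class of functions.

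Two points in your outline deserve more care if you were to write it out. First, $h a^{k}\alpha$ (or $\binom{k}{j_0}\sigma_{j_0}\alpha$) is \emph{not} badly approximable with the same constant as $\alpha$: from $\norm{q\alpha}\gg 1/q$ one only gets $\norm{q\,m\alpha}\gg 1/(qm)$, so the partial quotients of $m\alpha$ can grow linearly in $m$, and the existence of a convergent denominator in the window required by Weyl's inequality has to be argued through this degraded bound. You acknowledge the degradation in the correlation part but assert an ``explicit constant'' in the well-distribution part; the same caveat applies there, with the large-$a$ regime rescued by the trivial bound exactly as you say. Second, the numerology: the requirement is not merely $\sigma_{j_0}\ll M^{k-\ell+1}$ but that the resulting saving survives both the $q/M^{d}$ term of Weyl's inequality and the summation over $\mathbf{h}$ and the truncation parameter $H$; this is where the precise hypothesis $2\ell+1\leq k$ (rather than a weaker inequality) and the value of $\sigma_k$ come from, and it is the part of the proof that cannot be waved through. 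With those details supplied, the proposal is a faithful reconstruction of the Mauduit--S\'ark\"ozy argument.
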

Similar results for $k=1$ and $k=2$ can be found in Mauduit and
Sárközy \cite{mauduit_sarkoezy2000:finite_pseudorandom_binary}.

On the other hand Mauduit, Rivat and Sárközy
\cite{mauduit_rivat_sarkoezy2002:pseudo_random_properties} 
considered polynomial like functions, i.e. functions $f(x)=x^c$ with non-integer
$c>1$. They were able to show the following.
\begin{thm*}[{\cite[Theorem 1 and 2]{mauduit_rivat_sarkoezy2002:pseudo_random_properties}}]
  Let $c>1$ with $c\not\in\Z$ and set $R=\lceil c\rceil$.
  \begin{enumerate}
  \item For $0\leq b\leq a\leq (x-b)^{1-c/R}$ and $x\to\infty$, we have
    \[\abs{\sum_{an+b\leq x}e_{an+b}}\ll a^{-1+R/(2^R-1)}x^{1-(R-c)/(2^R-1)}.\]
  \item For $1\leq d\leq N^{1-2(R-c)/(2^R-1)}$ and $N\to\infty$, we have
    \[\abs{\sum_{n\leq N}e_ne_{n+d}}
    \ll cN^{1-(R-c)/(2^R-1)}\log^2N+\frac{N^{2-c}}{d}.\]
  \end{enumerate}
\end{thm*}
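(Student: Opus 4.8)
The plan is to reduce both inequalities to bounds for one-dimensional exponential sums $\sum_n\mathrm{e}(h\,\phi(n))$, where I write $\mathrm{e}(t)=e^{2\pi i t}$, and then to feed these into the van der Corput method. The starting point is the elementary identity
\[
  \chi(x)=2\psi(2x)-4\psi(x),\qquad \psi(y):=\{y\}-\tfrac12,
\]
(checked by splitting according to $0\le\{x\}<\tfrac12$ and $\tfrac12\le\{x\}<1$). Combined with Vaaler's trigonometric approximation of the sawtooth $\psi$ --- for each $H\ge1$ there are coefficients $a_h$ with $|a_h|\le(\pi|h|)^{-1}$ and a nonnegative Fej\'er majorant controlling $\psi(y)-\sum_{1\le|h|\le H}a_h\mathrm{e}(hy)$ --- this lets me write each $e_n=\chi(f(n))$, up to an explicit error, as a short combination of the additive characters $\mathrm{e}(hf(n))$ and $\mathrm{e}(2hf(n))$ (the factor $2$ only reindexes the frequencies).

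For part~(1) I set $g(n)=(an+b)^c$ and $M=\lfloor(x-b)/a\rfloor\asymp x/a$. The reduction above gives
\[
  \abs{\sum_{n\le M}\chi(g(n))}
  \ll \frac{M}{H}+\sum_{1\le h\le H}\frac1h\,\abs{\sum_{n\le M}\mathrm{e}(h g(n))},
\]
the Fej\'er terms and the $\psi(2g)$ contribution being absorbed in the same way. Since $g^{(R)}(n)\asymp a^R(an+b)^{c-R}\asymp a^Rx^{c-R}$ on the range, the van der Corput $R$-th derivative test bounds $S_h:=\sum_{n\le M}\mathrm{e}(hg(n))$ by $M\lambda_h^{\kappa}+\lambda_h^{-\kappa}$ with $\lambda_h\asymp h\mu$, $\mu:=a^Rx^{c-R}$ and $\kappa:=1/(2^R-2)$. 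Summing over $h$ (using $\sum_{h\le H}h^{\kappa-1}\ll H^\kappa$) and balancing $M/H$ against $M\mu^\kappa H^\kappa$ forces $H\asymp\mu^{-\kappa/(1+\kappa)}$; the arithmetic identity $\kappa/(1+\kappa)=1/(2^R-1)$ then turns the leading contribution into $M\mu^{1/(2^R-1)}$, which is exactly $a^{-1+R/(2^R-1)}x^{1-(R-c)/(2^R-1)}$ after inserting $M\asymp x/a$ and $\mu=a^Rx^{c-R}$. The hypothesis $a\le(x-b)^{1-c/R}$ is precisely what guarantees $\mu\le1$, hence $H\ge1$ and the admissibility of the whole optimization.

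For part~(2) I expand $\chi(f(n))\chi(f(n+d))$ by the same identity and apply Vaaler to both factors, which produces a double sum of two-dimensional exponential sums
\[
  T(h_1,h_2)=\sum_{n\le N}\mathrm{e}\bigl(h_1\beta_1 f(n)+h_2\beta_2 f(n+d)\bigr),
  \qquad \beta_1,\beta_2\in\{1,2\}.
\]
For the generic frequencies the phase $\Phi(n)=h_1\beta_1 n^c+h_2\beta_2(n+d)^c$ has $R$-th derivative of size $\asymp\max(|h_1|,|h_2|)\,N^{c-R}$, so exactly the estimate and optimization of part~(1) reproduce the first term $N^{1-(R-c)/(2^R-1)}$, the logarithmic losses from the two-dimensional frequency summation accounting for the factor $\log^2N$.

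The hard part, and the origin of the second term $N^{2-c}/d$, is the near-diagonal range $h_1\beta_1\approx-h_2\beta_2$. There the leading terms of the two phases cancel, so that $\Phi$ behaves like $h\bigl(f(n)-f(n+d)\bigr)\asymp -c\,h\,d\,n^{c-1}$, a phase of lower order whose derivatives are smaller by a factor $d$; the van der Corput gain is therefore lost. For these frequencies I would replace the $R$-th derivative test by the first-derivative (Kusmin--Landau) estimate for the bulk, together with a direct estimate on the short resonant range on which the phase fails to oscillate, the careful bookkeeping of these contributions being what yields $N^{2-c}/d$. Establishing that this resonant part is genuinely of the claimed size, and does not spoil the generic estimate, is the delicate point; it is also where the restriction $d\le N^{1-2(R-c)/(2^R-1)}$ enters, ensuring that the two terms in the final bound are correctly balanced.
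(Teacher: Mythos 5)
This theorem is not proved in the paper at all: it is quoted verbatim from Mauduit, Rivat and S\'ark\"ozy \cite{mauduit_rivat_sarkoezy2002:pseudo_random_properties}, the citation being the entire ``proof''. Measured against the argument in that source (which the later sections of the present paper generalize), your strategy is the right one and essentially the same: reduce to exponential sums via a Fourier decomposition of $\chi$ (your identity $\chi(x)=2\psi(2x)-4\psi(x)$ with $\psi(y)=\{y\}-\tfrac12$, combined with Vaaler, is an equivalent substitute for the Erd\H{o}s--Tur\'an and Erd\H{o}s--Tur\'an--Koksma inequalities of Lemmas~\ref{lem:erdos-turan} and~\ref{lem:koksma-szusz}), apply the van der Corput $R$-th derivative test off the diagonal, optimize in $H$, and treat the resonance $h_1+h_2=0$ by the first-derivative test. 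Your numerology for part~(1) is correct, including $\kappa/(1+\kappa)=1/(2^R-1)$ and the role of $a\le (x-b)^{1-c/R}$; the one (standard) omission there is that $g^{(R)}(n)\asymp a^R(an+b)^{c-R}$ is \emph{not} $\asymp a^Rx^{c-R}$ uniformly over $1\le n\le M$, since $c-R<0$ makes the derivative much larger at the start of the range, so you must decompose dyadically (as the paper does throughout) before the derivative test applies with a bounded ratio.

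Part~(2) contains a genuine gap. First, your claim that the off-diagonal phase has $R$-th derivative of size $\asymp\max(\abs{h_1},\abs{h_2})N^{c-R}$ is false: after reindexing the frequencies the derivative is $\asymp\abs{h_1+h_2(1+d/n)^{c-R}}\,n^{c-R}$, which for $h_1+h_2=\pm 1$ with $\abs{h_1},\abs{h_2}\asymp H$ is only $\gg n^{c-R}$, and even this lower bound requires $Hd/n\ll 1$. That is precisely where the hypothesis $d\le N^{1-2(R-c)/(2^R-1)}$ enters (it reads $H^2d\le N$ for the optimal $H$, guaranteeing non-resonance once the initial segment $n\le N/H$ is estimated trivially) --- not, as you suggest, to ``balance the two terms''. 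Second, the diagonal analysis producing $N^{2-c}/d$ is announced rather than executed: Kusmin--Landau (Lemma~\ref{lem:Kusmin_Landau}) needs $\norm{g'}\ge\lambda$, i.e.\ distance to \emph{all} integers, and for $c>2$ the derivative $\asymp hd\,n^{c-2}$ of the phase $h\left((n+d)^c-n^c\right)$ is increasing and unbounded, so the range must be partitioned according to the nearest integer; the resulting short non-oscillating ranges must then be summed over $h$ against the Vaaler weights and shown not to exceed the two claimed terms. You correctly flag this as the delicate point, but flagging it does not close it; as written, part~(2) is an outline with the hard half missing.
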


\section{The results}

The aim of the the present paper is to extend the considerations for the
function $f(x)=x^c$ with $c>1$ by
Mauduit, Rivat and Sárközy
\cite{mauduit_rivat_sarkoezy2002:pseudo_random_properties} to general
subpolynomial functions. This is motivated by the above mentioned connection
with discrepancy estimates and recent results by Bergelson \textit{et al.}
\cite{bergelson_kolesnik_madritsch+2014:uniform_distribution_prime}, Bergelson
and Richter \cite{bergelson_richter2017:density_coprime_tuples} and Bergelson,
Kolesnik and Sun
\cite{bergelson_kolesnik_son2019:uniform_distribution_subpolynomial} in that
direction.

We call two functions $f$ and $g$ equivalent (at $+\infty$) if they agree
eventually, \textit{i.e.} if there exists $x_0>0$ such that $f(x)=g(x)$ for all
$x>x_0$. The corresponding equivalence classes are often called germs of
functions. Let $B$ be the collection of equivalence classes. Then a Hardy field
is a subfield of the ring $(B,+,\cdot)$ that is closed under differentiation. We
denote by $\mathcal{H}$ the union of all Hardy fields. Without loss of
generality we may suppose that each representative $f\in\mathcal{H}$ is defined
in $[1,\infty[$.

This notion goes back to Hardy's consideration of logarithmico-exponential
functions
(\cite{hardy1971:orders_infinity, hardy1912:properties_logarithmico_exponential})
and some well known examples are the following functions:
\[x^c\,(c\in\R),\quad
\log(x),\quad
\exp(x),\quad
\Gamma(x),\quad
\zeta(x),\quad
\mathrm{Li}(x),\quad
\sin\left(\tfrac1x\right),\quad
\text{etc.}\]

The elements of $\mathcal{H}$ satisfy some properties, which are relevant to our
study. First we note that every $f\in\mathcal{H}$ has eventually constant sign
(because of the multiplicative inverse). This implies that every $f$ is
eventually monotone, since $f'\in\mathcal{H}$ (closed under differentiation) and
$f'(x)$ has eventually constant sign. Therefore $\lim_{x\to\infty}f(x)$ exists
(possible infinite).

For two functions $f,g\in \mathcal{H}$ we write $f(x)\prec g(x)$ if
$\frac{f(x)}{g(x)}\to0$ for $x\to\infty$. Then in view of 
Boshernitzan \cite{boshernitzan1994:uniform_distribution_and},
for a non negative integer $\ell$, we call
$f\in\mathcal{H}$ of type $x^{\ell+}$
if $x^\ell\prec f(x)\prec x^{\ell+1}$,
provided that the ``growth exponent''
$\beta(f):=\inf\{c\in[0,\infty[\colon f(x)\prec x^c\} > 0$.
Note that this condition garanties that $f$ is not a polynomial.

\begin{thm}\label{thm:well-distribution} Let $\ell\geq0$ be an integer. If
  $f\in\mathcal{H}$ is of type $x^{\ell+}$, then there exists
  $\sigma=\sigma(f)>0$ depending only on $f$ such that for all $\varepsilon>0$
  we have
  \[W(E_N)\ll N^{1-\sigma+\varepsilon},\]
  where the implied constant may depend on $f$ and $\varepsilon$.
\end{thm}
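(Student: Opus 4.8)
The plan is to translate the well-distribution measure into an estimate for exponential sums and then attack these by van der Corput's method; the Hardy field hypothesis enters only through the control of the successive derivatives of $f$. As a first step I would use the reduction already recorded in the introduction: for every admissible triple $(a,b,M)$ one has $\abs{U(E_N,M,a,b)}\le 2M\,D_M$, where $D_M$ is the discrepancy of the points $x_n=f(an+b)$ with $1\le n\le M$, and where $M\ll N/a$. Writing $\mathrm{e}(t):=e^{2\pi i t}$ and applying the Erd\H{o}s--Tur\'an inequality, for every integer $H\ge1$ we get
\[
  D_M\ll \frac1H+\frac1M\sum_{h=1}^H\frac1h\,\abs{\sum_{n=1}^M \mathrm{e}\bigl(hf(an+b)\bigr)},
\]
so that, after taking the maximum over $a,b,M$, the whole problem reduces to a bound for the exponential sums $S(h):=\sum_{n=1}^M \mathrm{e}(hf(an+b))$ that is uniform in the parameters $a,b$ and in $h$.

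The heart of the matter is the size of the derivatives $\phi^{(k)}(t)=h a^k f^{(k)}(at+b)$ of the phase $\phi(t)=hf(at+b)$. Here I would exploit that $\mathcal H$ is closed under differentiation: each $f^{(k)}$ again lies in $\mathcal H$, hence is eventually of constant sign and monotone, and since $f$ is of type $x^{\ell+}$ (and thus polynomially bounded) its logarithmic derivative converges, giving $f^{(k)}$ the growth exponent $\beta(f)-k$ and the regularly varying estimate $f^{(k)}(x)=x^{\beta(f)-k+o(1)}$. I would then take $k$ to be the least integer $k\ge2$ with $k>\beta(f)$, so that $\beta(f)-k<0$ and $f^{(k)}(x)\to0$ like a negative power of $x$, while the non-polynomiality forced by $\beta(f)>0$ prevents $f^{(k)}$ from vanishing. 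Splitting $1\le an+b\le N$ into $O(\log N)$ dyadic blocks $an+b\asymp Y$, eventual monotonicity together with the bounded ratio $f^{(k)}(2Y)/f^{(k)}(Y)\asymp 2^{\beta(f)-k}$ yields $\abs{f^{(k)}(x)}\asymp Y^{\beta(f)-k+o(1)}$ on each block---exactly the hypothesis required by the $k$-th derivative test.

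On each dyadic block I would apply van der Corput's $k$-th derivative test with $\lambda\asymp ha^kY^{\beta(f)-k}$ and block length $\ll Y/a$, sum the resulting estimates over the $O(\log N)$ blocks, and feed the outcome back into the Erd\H{o}s--Tur\'an bound; summing over $h$ and optimising $H$ then gives $\abs{U}\ll N^{1-\sigma+o(1)}$ with an explicit $\sigma=\sigma(f)>0$, uniformly in $a$ (the dependence on $a$ being favourable, so that the extremal case is $a=1$). The hypothesis $\beta(f)>0$ is precisely what makes the diagonal contribution genuinely subpolynomial; dropping it one only recovers the trivial bound, in accordance with the remark that $\beta(f)>0$ excludes logarithmic germs. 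The slowly varying factors $x^{o(1)}$ coming from the derivative estimates accumulate only into a factor $N^{o(1)}\le N^{\varepsilon}$ once $N$ is large in terms of $f$ and $\varepsilon$, which yields the stated bound $W(E_N)\ll N^{1-\sigma+\varepsilon}$.

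I expect the real obstacle to be the derivative analysis of an \emph{arbitrary} subpolynomial Hardy field germ, namely establishing the regularly varying estimate $f^{(k)}(x)=x^{\beta(f)-k+o(1)}$ with enough uniformity---constant sign, eventual monotonicity, and bounded dyadic ratios for every $f^{(k)}$---so that one van der Corput estimate applies simultaneously on all dyadic blocks and uniformly in the progression step $a$. Once these Hardy field inputs (essentially Boshernitzan's regular variation results) are in place, the passage through Erd\H{o}s--Tur\'an and the final optimisation are routine.
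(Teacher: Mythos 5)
Your overall strategy is the one the paper follows: reduce $W(E_N)$ to the discrepancy of $(f(an+b))_n$, apply Erd\H{o}s--Tur\'an, split the resulting exponential sums dyadically, control the derivatives of the phase through the Hardy-field estimate $|f^{(j)}(x)| = x^{\beta-j+o(1)}$ (this is exactly Lemma~\ref{lem:higher_derivative_order}, quoted from Frantzikinakis, combined with $f(x)=x^{\beta+o(1)}$), apply a derivative test on each dyadic block, and optimise over $H$. The paper's refinements are essentially cosmetic for the purpose of this theorem: it uses Kusmin--Landau when $\beta\le\tfrac12$ to get a better exponent, and it takes $r=\lceil\beta+\tfrac12\rceil$ rather than the least integer exceeding $\beta$ so that the saving $r-\beta\ge\tfrac12$ is bounded away from $0$; your choice of $k$ still yields some positive $\sigma(f)$, which is all that is claimed.

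The one genuinely wrong step is your parenthetical claim that the dependence on $a$ is favourable and that the extremal case is $a=1$. It is not: with block length $P\asymp M2^{-q}$ the relevant derivative is $|\phi^{(k)}|\asymp h\,a^{\beta+o(1)}P^{\beta-k+o(1)}$, and the factor $a^{\beta}$ makes every term of the van der Corput bound \emph{worse} as $a$ grows; more importantly, the hypothesis that the $k$-th derivative tends to $0$ fails outright once $a$ is a sufficiently large power of $N$ (note that $a$ may be as large as $N$, in which case $M\ll N/a$ is tiny). The derivative-test route therefore only covers $a$ up to roughly $x^{1-\beta/r}$, and for larger $a$ one must fall back on the trivial bound $|U(E_N,M,a,b)|\le M\ll N/a$ and check that the resulting exponent (about $\beta/r$) does not exceed $1-\sigma$; this is exactly the ``small $a$ / large $a$'' case distinction carried out at the end of Section~4 of the paper. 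Without that separate argument your proof does not give a bound uniform over all admissible $a$, which the definition of $W(E_N)$ requires.
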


Following the proof one can explicitely calculate $\sigma(f)$. However, we do not
claim that our choice is optimal and thus we omit these calculations in the
present paper.


\begin{thm}\label{thm:correlation_measure_bound}
  If $f\in\mathcal{B}$ is of type $x^{\ell+}$ with $\ell\geq1$, then for
  $\beta=\beta(f)$, $2\leq s< \beta+1$ and $\varepsilon>0$, there exists
  $\eta>0$ depending on $\beta$ only such that
  \[
    C_s(E_N)\ll N^{1-\eta+\varepsilon},
  \]
  where the implied constant may depend on $f$ and $\varepsilon$.
\end{thm}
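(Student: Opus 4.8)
The plan is to reduce the correlation sum to a family of exponential sums with phases built from $f$ and to bound these by a van der Corput derivative test; the hypothesis $s<\beta+1$ is exactly what forces the combined phase to retain a positive growth exponent. First I would expand each factor $e_{n+d_j}=\chi(f(n+d_j))$ through the Fourier series of the square wave. Since $\chi(x)=\operatorname{sgn}(\sin 2\pi x)$ has mean value $\int_0^1\chi=0$, its Fourier series $\chi(x)=\sum_{h\ \mathrm{odd}}c_h\,e^{2\pi i h x}$ carries no constant term and $\abs{c_h}\ll 1/\abs{h}$. Truncating at a level $H$ (a small power of $N$ fixed at the end), I would write $\chi=\chi_H+(\chi-\chi_H)$, where $\chi_H$ is the partial sum, $\abs{\chi_H}\ll\log H$, and $\chi-\chi_H$ is dominated by a nonnegative trigonometric polynomial $\Delta_H$ of degree $O(H)$ with $\int_0^1\Delta_H\ll 1/H$ (Vaaler/Fejér). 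Multiplying out $\prod_{j=1}^s\chi(f(n+d_j))$, the error contributes $\ll(\log H)^{s-1}\sum_{j}\sum_{n\le M}\Delta_H(f(n+d_j))$, which by the near-uniform distribution of $(f(n+d_j))_n$ modulo $1$ — itself a consequence of the exponential-sum estimates already used for Theorem~\ref{thm:well-distribution} — is $\ll(\log H)^{s-1}(M/H+M^{1-\delta})$. The main term becomes
\[
  \sum_{\mathbf h}\Big(\prod_{j=1}^s c_{h_j}\Big)\,S(\mathbf h,\mathbf d),\qquad
  S(\mathbf h,\mathbf d)=\sum_{n=1}^M e^{2\pi i\,F_{\mathbf h,\mathbf d}(n)},\quad
  F_{\mathbf h,\mathbf d}(x)=\sum_{j=1}^s h_j f(x+d_j),
\]
the sum ranging over $\mathbf h=(h_1,\dots,h_s)$ with each $h_j$ odd and $0<\abs{h_j}\le H$; crucially no frequency vanishes, precisely because $c_0=0$.

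Next I would estimate $S(\mathbf h,\mathbf d)$ by applying a van der Corput $q$-th derivative test to the phase $F=F_{\mathbf h,\mathbf d}\in\mathcal H$ on the interval $x\in[1,M]$, choosing the order $q$ so that $\lambda:=\abs{F^{(q)}}$ sits in the range where the test yields a power saving. Since every argument $x+d_j$ lies in $[1,N]$ and each $f^{(r)}$ is eventually monotone, $F^{(q)}(x)=\sum_j h_j f^{(q)}(x+d_j)$ is controllable, and a saving $S(\mathbf h,\mathbf d)\ll M^{1-\rho}$ follows as soon as $\lambda$ is bounded below by a fixed positive power of $N$ (high-frequency vectors, where $\lambda$ is large, are either handled by raising $q$ or absorbed through the damping weight $\prod_j\abs{c_{h_j}}$). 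Summing over frequencies, $\sum_{\mathbf h}\prod_j\abs{c_{h_j}}\ll(\log H)^s\ll N^{\varepsilon}$, so the whole main term is $\ll N^{1-\rho+\varepsilon}$.

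The hard part is the lower bound on $\abs{F^{(q)}}$, uniform over all admissible $\mathbf h$ and $\mathbf d$, and this is exactly where $s<\beta+1$ is used. Heuristically, viewing $F(x)=\sum_j h_j f(x+d_j)$ through the values and derivatives of $f$, the worst case is that $\mathbf h$ is tuned to annihilate the leading data of $f$, i.e. $\sum_j h_j d_j^m=0$ for $m=0,1,\dots,s-2$ — the maximal number of independent homogeneous conditions a nonzero integer vector in $s$ variables can satisfy (a Vandermonde phenomenon, solvable since the $d_j$ are distinct). This mimics an $(s-1)$-fold finite difference and lowers the growth exponent of the phase from $\beta$ to $\beta-(s-1)$; since in the Hardy field the logarithmic derivative satisfies $xf'(x)/f(x)\to\beta$, so that $f^{(r)}(x)=x^{\beta-r+o(1)}$, the surviving term is of size $\asymp x^{\beta-(s-1)+o(1)}$. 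The hypothesis $s<\beta+1$ makes the exponent $\beta-(s-1)>0$ strictly positive, so the relevant derivative of $F$ cannot be annihilated and stays $\gg N^{\beta-(s-1)+o(1)}$, a genuine positive power of $N$. Turning this growth-exponent heuristic into a clean two-sided bound on a single fixed derivative $F^{(q)}$ — valid simultaneously for all $\mathbf h,\mathbf d$, incorporating the factor $H$ from the frequencies, and not relying on a naive Taylor expansion (which fails when the shifts $d_j$ are as large as $N$) — is the crux of the argument; once it is established, the van der Corput test delivers the uniform saving.

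Finally I would balance the two contributions, the truncation error $\ll(\log H)^{s-1}(M/H+M^{1-\delta})$ against the main-term saving $\ll N^{1-\rho+\varepsilon}$, by choosing $H=N^{\theta}$ with $\theta>0$ small. This yields $C_s(E_N)\ll N^{1-\eta+\varepsilon}$ with $\eta=\eta(\beta)>0$ determined by $\rho$, $\theta$ and the value $\beta-(s-1)$, completing the proof.
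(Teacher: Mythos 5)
Your overall route is the same as the paper's: expand the product of $\chi$'s into trigonometric frequencies (you do it factor by factor via a truncated Fourier/Vaaler expansion, the paper equivalently passes through the multidimensional discrepancy and Lemma~\ref{lem:koksma-szusz}), reduce to exponential sums with phase $F_{\mathbf h,\mathbf d}(x)=\sum_j h_jf(x+d_j)$, observe by a Vandermonde argument that the linear forms $L_k(\mathbf h)=\sum_j h_jd_j^k$ cannot all vanish for $k\le s-1$, and use $s<\beta+1$ to guarantee that the surviving term has positive growth exponent $\beta-u>0$ with $u\le s-1$, feeding this into a van der Corput-type derivative test. All of these ingredients are correctly identified, and the heuristic explanation of where $s<\beta+1$ enters matches the paper exactly.

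The gap is at the step you yourself label ``the crux'': you never establish the two-sided bound $\lambda_r\ll\abs{F^{(r)}}\ll\alpha_r\lambda_r$ on each dyadic block that Lemma~\ref{lem:BKS} (or any $r$-th derivative test) requires, uniformly in $\mathbf h$ and $\mathbf d$. A lower bound of the shape $\abs{F^{(q)}}\gg N^{\beta-(s-1)+o(1)}$ is not enough: the estimate of Lemma~\ref{lem:BKS} contains the term $(\alpha_r\lambda_r)^{1/(2R-2)}$, so one also needs a matching upper bound with controlled ratio $\alpha_r$, and the order $r$ must be tuned to the actual size of $L_u(\mathbf h)$ (which carries the factors $H$ and $d_j^u$), not merely to $\beta$. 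The paper obtains exactly this by Taylor-expanding $f(n+d_j)$ around $d_j=0$, so that $F_{\mathbf h,\mathbf d}(n)=\sum_{k\ge u}\frac{L_k(\mathbf h)}{k!}f^{(k)}(n)+\text{error}$ with dominant term $\frac{L_u(\mathbf h)}{u!}f^{(u)}(n)$, then setting $\lambda_r=\abs{L_u(\mathbf h)}P_q^{\beta-u-r-\varepsilon}$, $\alpha_r=P_q^{2\varepsilon}$ and choosing $r=\lceil w+\beta+\tfrac12\rceil$ where $\abs{L_u(\mathbf h)}=P_q^{w}$. You explicitly decline to use this Taylor expansion (on the legitimate ground that the shifts $d_j$ may be as large as $N-M$, hence comparable to or larger than $n$, so that the dominance of the $L_u$-term is not obvious), but you offer no replacement; as written, your argument therefore asserts rather than proves the uniform derivative estimate on which everything else rests. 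To close the gap you would either have to justify the dominance of the $L_u$-term in the relevant range of $n$ (the paper restricts to $n\ge c_uM/H$ and to dyadic blocks with $P_q\ge M^{9/10}$ for this purpose), or supply a genuinely different mechanism for the two-sided bound; until then the proposal is an accurate outline of the paper's strategy rather than a proof.
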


We note that following our proof one can explicitely calculate $\eta$ for given
$\beta$. However, since our choices are not optimal, we skip the details here.

Our final result is a counter example saying that the bound $s\leq \ell$ is sharp.

\begin{thm}
  Let $0<c<1$. Then for $f(x)=x^c$ we have
  \[C_2(E_N)\gg N.\]
\end{thm}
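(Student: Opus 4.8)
The plan is to exploit that for $0<c<1$ the derivative $f'(x)=cx^{c-1}$ tends to $0$, so consecutive values $n^c$ and $(n+1)^c$ differ by only $cn^{c-1}=o(1)$; hence the sequence $(e_n)$ is eventually constant along long runs, and the natural candidate in the definition of $C_2$ is the shortest possible shift. Concretely, I would take $s=2$, the correlation vector $\mathbf{d}=(0,1)$, and $M=N-1$ (which is admissible since $M+d_2=N\le N$), and estimate
\[
  V(E_N,N-1,(0,1))=\sum_{n=1}^{N-1}e_ne_{n+1}.
\]

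The key observation is that, by the definition \eqref{mani:Xi} of $\chi$, the product $e_ne_{n+1}=\chi(n^c)\chi((n+1)^c)$ equals $-1$ precisely when $\chi$ changes sign between $n^c$ and $(n+1)^c$, that is, when an odd number of multiples of $\tfrac12$ lies between them. Setting $C^-=\#\{1\le n\le N-1\colon e_ne_{n+1}=-1\}$, each such $n$ forces at least one multiple of $\tfrac12$ in the interval $(n^c,(n+1)^c]$. Since these intervals are pairwise disjoint and contained in $(1,N^c]$, and the latter contains at most $2N^c$ multiples of $\tfrac12$, I obtain $C^-\le 2N^c$.

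It then remains to combine this with the identity $\sum_{n=1}^{N-1}e_ne_{n+1}=(N-1)-2C^-$, which yields
\[
  C_2(E_N)\ge\abs{\sum_{n=1}^{N-1}e_ne_{n+1}}\ge (N-1)-4N^c.
\]
Because $c<1$ the correction term $N^c$ is $o(N)$, so the right-hand side is $\gg N$, proving the claim.

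The computation is short; the only delicate point — and the crux of the argument — is the counting step bounding the number of sign changes of $(e_n)$ by the number of half-integer level crossings of $x\mapsto x^c$ on $[1,N]$. This is exactly where the hypothesis $c<1$ enters: it keeps the total increase of $x^c$ over $[1,N]$ equal to $N^c-1$, so the slowly growing function can meet the $\tfrac12$-spaced levels only $O(N^c)$ times. For $c\ge1$ this bound would already be of size $\gg N$ and the estimate would break down, in agreement with the small-correlation regime governed by Theorem~\ref{thm:correlation_measure_bound}.
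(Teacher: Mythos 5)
Your argument is correct, and it is genuinely different from --- and considerably more elementary than --- the paper's proof. You observe that $\chi(x)=(-1)^{\lfloor 2x\rfloor}$, so $e_ne_{n+1}=-1$ forces an integer in $(2n^c,2(n+1)^c]$; since the intervals $(n^c,(n+1)^c]$ are disjoint and lie in $(1,N^c]$, the number of sign changes is at most $2N^c=o(N)$, and the identity $\sum_{n\le N-1}e_ne_{n+1}=(N-1)-2C^-$ finishes the proof. All steps check out, including the admissibility of $\mathbf{d}=(0,1)$, $M=N-1$ in the definition of $C_2$. The paper instead replaces $\chi$ by Vaaler's trigonometric polynomials $A_H,B_H$ (Lemma~\ref{lem:vaaler}), isolates the diagonal frequencies $h_1+h_2=0$ in $\sum_n A_H(n^c)A_H((n+1)^c)$, bounds the off-diagonal terms by Kusmin--Landau after a dyadic splitting, and obtains the main term from a lower bound for $\sum_n e(chn^{c-1})$ (Lemma~\ref{lem:robert}); the error terms involving $B_H$ are controlled separately. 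What the paper's route buys is machinery: it exhibits exactly which Fourier modes carry the correlation and is the template one would use for shifts $d_2-d_1$ growing with $N$ or for quantifying how close $\sum e_ne_{n+1}$ is to $M$. What your route buys is transparency and generality at essentially no cost: the same two-line counting argument gives $C_2(E_N)\gg N$ for any eventually increasing $f$ with $f(N)=o(N)$, and it makes plain that the failure of pseudorandomness here is simply that $(e_n)$ changes sign only $o(N)$ times. Your closing remark about where $c<1$ enters is also exactly right.
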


The structure of the present paper is as follows. In Section
\ref{section:useful-estimates} we lay out our toolbox full of
estimates for the discrepancy, the derivative and exponential sums,
which we need for the proofs.

\section{Useful estimates}\label{section:useful-estimates}

In this section we want to collect some estimates, which will be of importance
in the proofs in the following sections.

\subsection{Discrepancy estimates}

As already mentioned in the introduction the pseudo-random measures are linked
to the discrepancy of the underlying sequence.

\begin{defi}\label{def:discrepancy}
  Let $N\geq1$ be a positive integer and let $\mathbf{x}_1,\ldots,\mathbf{x}_N$ be a finite
  sequence of points in $\R^s$. Then the discrepancy of $\mathbf{x}_1,\ldots,\mathbf{x}_N$ is
  defined by
  \[D_N(\mathbf{x}_1,\ldots,\mathbf{x}_N)
  =\sup_{I_1,\ldots,I_s}\abs{\frac1N
  \sum_{i=1}^N\mathbf{1}_{\{\mathbf{x}_i\}\in I_1\times\cdots\times I_s}-
  \mu(I_1)\ldots\mu(I_s)},\]
  where the supremum is taken over all intervals $I_1,\ldots,I_s$ contained in
  $[0,1]$, $\mu(I)$ stands for the length of the interval $I$, and \textit{i.e.}$\{\mathbf{x}\}=(\{x_1\},\ldots,\{x_s\})\in[0,1]^s$.
\end{defi}

The first estimate deals with the case of the discrepancy of a one-dimensional
sequence. This will be of importance in the proof of the upper bound for the
well-distribution measure.

\begin{lem}[\textsc{Erd\H{o}s-Turán}]
  \label{lem:erdos-turan}
  For any integers $N>0$, $H>0$, and any real numbers $x_1,\ldots,x_N$
  we have
  \[D_N(x_1,\ldots,x_N)\ll\frac1{H+1}+\sum_{h=1}^H\frac1h\abs{\frac1N\sum_{n=1}^N e(hx_n)}.\]
\end{lem}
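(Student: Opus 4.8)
The plan is to approximate, for a single subinterval $I=[\alpha,\beta)\subseteq[0,1]$, the $1$-periodic indicator $x\mapsto\mathbf{1}_{\{x\}\in I}$ from above and below by trigonometric polynomials of degree at most $H$, and then to feed the exponential sums $\frac1N\sum_n e(hx_n)$ into the resulting comparison. The key analytic input is the Beurling--Selberg extremal construction: there exist trigonometric polynomials
\[
  S^-_H(x)=\sum_{\abs{h}\le H}\widehat{S^-_H}(h)\,e(hx),\qquad
  S^+_H(x)=\sum_{\abs{h}\le H}\widehat{S^+_H}(h)\,e(hx)
\]
of degree at most $H$ that sandwich the indicator, $S^-_H(x)\le\mathbf{1}_{\{x\}\in I}\le S^+_H(x)$ for all $x\in\R$, and that are close to it in $L^1$, namely $\int_0^1\big(S^+_H-\mathbf{1}_{I}\big)\le\frac1{H+1}$ and $\int_0^1\big(\mathbf{1}_{I}-S^-_H\big)\le\frac1{H+1}$. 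From these two facts alone the needed bound on the Fourier coefficients follows for free: writing $\widehat{\mathbf{1}_I}(h)=\int_\alpha^\beta e(-hx)\,dx$, which satisfies $\abs{\widehat{\mathbf{1}_I}(h)}\le\frac1{\pi\abs{h}}$, and using that $S^\pm_H-\mathbf{1}_I$ has constant sign, we get $\abs{\widehat{S^\pm_H}(h)-\widehat{\mathbf{1}_I}(h)}\le\int_0^1\abs{S^\pm_H-\mathbf{1}_I}\le\frac1{H+1}$, whence $\abs{\widehat{S^\pm_H}(h)}\le\frac1{H+1}+\frac1{\pi\abs{h}}$ for $1\le\abs{h}\le H$, while $\widehat{S^\pm_H}(0)=\mu(I)+O\!\big(\frac1{H+1}\big)$.

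Granting this, the estimate is immediate. Summing the pointwise sandwich over $1\le n\le N$ and dividing by $N$ gives
\[
  \frac1N\sum_{n=1}^N S^-_H(x_n)\;\le\;\frac1N\sum_{n=1}^N\mathbf{1}_{\{x_n\}\in I}\;\le\;\frac1N\sum_{n=1}^N S^+_H(x_n).
\]
Expanding $S^\pm_H$ into its at most $2H+1$ Fourier modes and interchanging the two finite sums, the mode $h=0$ contributes $\widehat{S^\pm_H}(0)=\mu(I)+O\!\big(\frac1{H+1}\big)$, while the modes $1\le\abs{h}\le H$ contribute $\sum_{1\le\abs{h}\le H}\widehat{S^\pm_H}(h)\,\frac1N\sum_{n=1}^N e(hx_n)$, a quantity whose modulus is at most
\[
  \sum_{1\le\abs{h}\le H}\Big(\frac1{H+1}+\frac1{\pi\abs{h}}\Big)\abs{\frac1N\sum_{n=1}^N e(hx_n)}
  \ll\sum_{h=1}^H\frac1h\abs{\frac1N\sum_{n=1}^N e(hx_n)},
\]
where I used $\frac1{H+1}\le\frac1h$ for $1\le h\le H$ to absorb the first term into the second, and paired $h$ with $-h$ (the two exponential sums being complex conjugates of equal modulus). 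Subtracting $\mu(I)$ from the upper and lower estimates and taking absolute values yields $\abs{\frac1N\sum_{n}\mathbf{1}_{\{x_n\}\in I}-\mu(I)}\ll\frac1{H+1}+\sum_{h=1}^H\frac1h\abs{\frac1N\sum_{n}e(hx_n)}$; since the right-hand side is independent of $I$, taking the supremum over all intervals $I\subseteq[0,1]$ produces exactly the claimed bound for $D_N(x_1,\ldots,x_N)$.

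The only genuine difficulty is the existence of the majorant and minorant polynomials $S^\pm_H$ of degree $\le H$ with $L^1$ defect $\le\frac1{H+1}$; this is the classical Beurling--Selberg extremal problem, and I would quote the explicit construction (via the periodization of Beurling's entire majorant of the sign function, or Selberg's direct interpolation polynomials) from the standard references of Montgomery or Vaaler. Since the lemma only demands a bound with an absolute implied constant, the sharp extremal value of the $L^1$ defect is not needed: any construction giving defect $\ll\frac1{H+1}$ together with the pointwise sandwich suffices, and the requisite coefficient decay $\abs{\widehat{S^\pm_H}(h)}\ll\frac1h+\frac1{H+1}$ is, as noted above, an automatic consequence.
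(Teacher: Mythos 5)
Your proposal is correct: the Beurling--Selberg/Vaaler sandwich $S^-_H\le\mathbf{1}_I\le S^+_H$ with $L^1$ defect $\le\frac1{H+1}$, the resulting automatic coefficient bound $\abs{\widehat{S^{\pm}_H}(h)}\le\frac1{H+1}+\frac1{\pi\abs h}$, and the pairing of $h$ with $-h$ yield exactly the stated inequality, uniformly in $I$. The paper gives no in-house proof --- it simply cites Lemma~1 of Mauduit--Rivat--S\'ark\"ozy (where the implied constant is $1$) and Rivat--Tenenbaum --- and those proofs rest on precisely this extremal trigonometric-polynomial construction, so your argument is essentially the same as the one the paper points to, merely written out in full.
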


\begin{proof}
  For a proof with the implied constant equal to $1$,
  see \cite[Lemma~1]{mauduit_rivat_sarkoezy2002:pseudo_random_properties}
  (and see also \cite{rivat-tenenbaum-2005}).
\end{proof}

For the correlation measure we need an estimate for the multidimensional
discrepancy, which is provided by the following lemma.

\begin{lem}[\textsc{Koksma-Sz\H{u}sz}]
  \label{lem:koksma-szusz}
  Let $s>0$ be an integer. For $\mathbf{h}=(h_1,\ldots,h_s)\in\Z^s$, write
  \[
    \varphi(\mathbf{h})=\max_{j=1,\ldots,s}\abs{h_j},\quad
    \mathbf{r}(\mathbf{h})=\prod_{j=1}^s\max(\abs{h_j},1).
  \]
  Let $\mathbf{x}_1,\ldots,\mathbf{x}_N$ be a finite sequence of points in
  $\R^s$. For any integer $H>0$ we have
  \[
    D_N(\mathbf{x}_1,\ldots,\mathbf{x}_N)
    \ll_s\frac1H+\frac1N\sum_{0<\varphi(\mathbf{h})\leq H}
    \frac1{\mathbf{r}(\mathbf{h})}\abs{\sum_{n=1}^N
    e(\mathbf{h}\cdot \mathbf{x}_n)}.
  \]
\end{lem}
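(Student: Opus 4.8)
The plan is to prove this multidimensional Erd\H{o}s--Tur\'an--Koksma inequality by the classical Fourier-analytic method: one approximates the indicator function of an arbitrary box $B=I_1\times\cdots\times I_s\subseteq[0,1]^s$ from above and below by trigonometric polynomials of degree at most $H$ in each variable, and then reads off the discrepancy from their Fourier coefficients, which reproduce exactly the exponential sums $\sum_n e(\mathbf{h}\cdot\mathbf{x}_n)$ appearing on the right-hand side. Since the supremum defining $D_N$ runs over all boxes, it suffices to prove the stated bound for a fixed $B$ with constants independent of $B$.

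First I would recall the one-dimensional building blocks (the Beurling--Selberg / Vaaler construction): for every interval $J\subseteq[0,1]$ and every integer $H\ge1$ there are trigonometric polynomials $J_-\le\mathbf{1}_J\le J_+$ of degree at most $H$ whose zeroth Fourier coefficients satisfy $\widehat{J_\pm}(0)=\mu(J)\pm\frac1{H+1}$, whose remaining coefficients obey $\abs{\widehat{J_\pm}(h)}\le\frac1{\pi\abs{h}}+\frac1{H+1}\ll\frac1{\abs{h}}$ for $1\le\abs{h}\le H$, and which are uniformly bounded, $\abs{J_\pm(x)}\ll1$. These three facts---majorization, mass close to $\mu(J)$, and decay of the coefficients like $1/\abs{h}$---are exactly what will be needed coordinatewise.

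Next I would assemble an $s$-dimensional majorant and minorant for $\mathbf{1}_B$. The naive product $\prod_j (I_j)_+$ need not dominate $\mathbf{1}_B$ because the one-dimensional factors can be negative, so the key step is a telescoping identity
\[
\prod_{j=1}^s (I_j)_+(x_j)-\prod_{j=1}^s\mathbf{1}_{I_j}(x_j)
=\sum_{j=1}^s\Bigl(\prod_{i<j}\mathbf{1}_{I_i}(x_i)\Bigr)\bigl((I_j)_+-\mathbf{1}_{I_j}\bigr)(x_j)\Bigl(\prod_{i>j}(I_i)_+(x_i)\Bigr),
\]
which, together with the uniform bounds on the factors, shows that $\prod_j(I_j)_+$ is an honest majorant up to a controllable error. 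Carrying this out yields trigonometric polynomials $T^\pm$ of degree at most $H$ in each variable with $T^-\le\mathbf{1}_B\le T^+$, with $\widehat{T^\pm}(\mathbf{0})=\mu(B)+O_s(1/H)$, and with off-diagonal coefficients bounded by $\abs{\widehat{T^\pm}(\mathbf{h})}\ll_s\frac1{\mathbf{r}(\mathbf{h})}$ and vanishing once $\varphi(\mathbf{h})>H$. I expect this product/telescoping step, and the bookkeeping needed to keep every Fourier coefficient controlled by $1/\mathbf{r}(\mathbf{h})$ throughout the range $0<\varphi(\mathbf{h})\le H$, to be the main obstacle.

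Finally I would substitute these polynomials into the counting function. From $T^-\le\mathbf{1}_B\le T^+$ we get
\[
\frac1N\sum_{n=1}^N T^-(\mathbf{x}_n)-\mu(B)
\le\frac1N\sum_{n=1}^N\mathbf{1}_{\{\mathbf{x}_n\}\in B}-\mu(B)
\le\frac1N\sum_{n=1}^N T^+(\mathbf{x}_n)-\mu(B),
\]
and expanding each $T^\pm$ in its Fourier series turns $\frac1N\sum_n T^\pm(\mathbf{x}_n)$ into $\sum_{\varphi(\mathbf{h})\le H}\widehat{T^\pm}(\mathbf{h})\,\frac1N\sum_n e(\mathbf{h}\cdot\mathbf{x}_n)$. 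The frequency $\mathbf{h}=\mathbf{0}$ contributes $\mu(B)+O_s(1/H)$, which cancels the main term and leaves the $1/H$ summand; the remaining frequencies are estimated by $\abs{\widehat{T^\pm}(\mathbf{h})}\ll_s 1/\mathbf{r}(\mathbf{h})$ times the exponential sum. Taking the supremum over all boxes $B$ and over both signs then gives precisely the claimed bound.
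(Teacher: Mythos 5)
Your proposal cannot be compared with an argument in the text, because the paper does not prove this lemma at all: it simply cites Lemma~3 of Mauduit--Rivat--S\'ark\"ozy \cite{mauduit_rivat_sarkoezy2002:pseudo_random_properties}. What you sketch is precisely the classical proof of the multidimensional Erd\H{o}s--Tur\'an--Koksma inequality (Koksma--Sz\H{u}sz) via one-dimensional Beurling--Selberg/Vaaler polynomials taken coordinatewise, and its skeleton is correct: since each factor depends on a separate coordinate, the Fourier coefficients of the product factor as $\widehat{T}(\mathbf{h})=\prod_j\widehat{f_j}(h_j)$, which is exactly what yields the bound $\ll_s 1/\mathbf{r}(\mathbf{h})$, the truncation at $\varphi(\mathbf{h})\leq H$, and the main-term cancellation $\mu(B)+O_s(1/H)$ at $\mathbf{h}=\mathbf{0}$. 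So you supply a self-contained proof where the paper only points to the literature; this is a legitimate and standard route.

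One local error, though it is repairable: the direction of your worry is reversed. The Selberg majorant satisfies $(I_j)_+\geq\mathbf{1}_{I_j}\geq0$, so each factor is nonnegative and the naive product $\prod_j(I_j)_+$ \emph{does} majorize $\mathbf{1}_B$ with no correction needed; it is the \emph{minorant} that fails naively, because $(I_j)_-$ can be negative. Your telescoping identity is the right tool for that, but as written its right-hand side still contains the indicators $\mathbf{1}_{I_i}$, so it does not yet produce a trigonometric polynomial. You must additionally use $0\leq\prod_{i<j}\mathbf{1}_{I_i}\leq1$ and $0\leq(I_j)_+-\mathbf{1}_{I_j}\leq(I_j)_+-(I_j)_-$ (all factors nonnegative) to arrive at the genuine trigonometric minorant
\[
  T^-=\prod_{j=1}^s(I_j)_+-\sum_{j=1}^s\left((I_j)_+-(I_j)_-\right)\prod_{i>j}(I_i)_+,
\]
which satisfies $T^-\leq\mathbf{1}_B$, has degree at most $H$ in each variable, and has zeroth coefficient $\mu(B)+O_s(1/H)$ because the zeroth Fourier coefficient of $(I_j)_+-(I_j)_-$ equals $\frac{2}{H+1}$ and all the one-dimensional zeroth coefficients are $O(1)$. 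With that repair, your sandwich-and-expand step goes through verbatim and gives the stated inequality, in agreement with the standard treatments (e.g.\ Drmota--Tichy, Theorem~1.21) to which the cited Lemma~3 ultimately reduces.
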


\begin{proof}
  This is \cite[Lemma 3]{mauduit_rivat_sarkoezy2002:pseudo_random_properties}.
\end{proof}

For the counterexample we need a lower bound. Therefore we need to approximate
the functions $\chi$ by trigonometric sums.

\begin{lem}\label{lem:vaaler}
  For any  integer $H\geq 1$ 
  there exist real valued
  trigonometric polynomials $A_{H}(x)$ and $B_{H}(x)$ 
  such that for any $x\in\R$
  \begin{equation}\label{eq:vaaler-approximation}
    \left| \chi(x) - A_{H}(x) \right|
    \leq
    B_{H}(x),
  \end{equation}
  where
  \begin{align}
    \label{eq:definition-A}
    A_{H}(x)
    =2
    \sum_{1\leq\abs{h}\leq H}a_h(H) e(hx)
    \\
    \label{eq:definition-B}
    B_{H}(x)
    =
    b_0(H) + 2\sum_{h=1}^H  b_h(H) e(hx),
  \end{align}
  with coefficients $a_h(H)$ and $b_h(H)$ defined by
  \begin{equation}\label{eq:vaaler-coef-a}
    a_h(H) = e\left(-\tfrac{h}{2}\right)\left(
    \tfrac{\sin \frac{\pi h}{2} }{\pi h}
    \left( 
      \pi \tfrac{\abs{h}}{H+1} \left(1-\tfrac{\abs{h}}{H+1}\right) 
      \cot \pi \tfrac{\abs{h}}{H+1}
      +
      \tfrac{\abs{h}}{H+1}\right)
    \right)
  \end{equation}  
  \begin{equation}\label{eq:vaaler-coef-b}
    b_h(H) = e\left(-\tfrac{h}{2}\right)
    \tfrac{1}{H+1} \left(1-\tfrac{\abs{h}}{H+1}\right) 
    \cos\frac{\pi h}{2}
    .
  \end{equation}
\end{lem}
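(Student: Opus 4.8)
The plan is to reduce the step function $\chi$ to the sawtooth $\psi(x):=\{x\}-\tfrac12$ and then invoke the classical extremal approximation of $\psi$ by trigonometric polynomials of Beurling--Selberg--Vaaler type. The reduction uses $\chi(x)=2\,\mathbf{1}_{[0,1/2)}(\{x\})-1$ together with the standard representation $\mathbf{1}_{[0,1/2)}(\{x\})=\tfrac12-\psi(x)+\psi(x-\tfrac12)$ of the indicator of an interval by sawtooths; combining the two gives $\chi(x)=2\psi(x-\tfrac12)-2\psi(x)$ off the jump set. In particular $\chi$ is, up to its jumps, an odd function carrying only odd harmonics, which is the structural reason that the even frequencies will drop out of $A_{H}$ and that the factor $\sin\tfrac{\pi h}{2}$ appears in \eqref{eq:vaaler-coef-a}.

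The key input I would quote is Vaaler's theorem: for each integer $H\ge1$ there is a trigonometric polynomial $\psi^{*}$ of degree $\le H$, with Fourier coefficients $\hat\psi^{*}(h)=-\tfrac{1}{2\pi i h}\bigl(\pi\tfrac{|h|}{H+1}(1-\tfrac{|h|}{H+1})\cot(\pi\tfrac{|h|}{H+1})+\tfrac{|h|}{H+1}\bigr)$ for $1\le|h|\le H$, together with a nonnegative majorant $J_{H}\ge0$ built from the Fejér kernel, such that $\abs{\psi(x)-\psi^{*}(x)}\le J_{H}(x)$ for every $x$. Granting this, I would set $A_{H}(x):=2\psi^{*}(x-\tfrac12)-2\psi^{*}(x)$ and $B_{H}(x):=2J_{H}(x-\tfrac12)+2J_{H}(x)$. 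Both are real trigonometric polynomials of degree $\le H$; $B_{H}\ge0$ since it is a sum of nonnegative kernels; and the triangle inequality applied to $\chi-A_{H}=2(\psi-\psi^{*})(x-\tfrac12)-2(\psi-\psi^{*})(x)$ yields \eqref{eq:vaaler-approximation} at once.

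It then remains to read off the Fourier coefficients. Since $e(-h/2)=(-1)^{h}$, shifting by $\tfrac12$ multiplies the $h$-th coefficient by $(-1)^{h}$, so the $e(hx)$-coefficient of $A_{H}$ is $2\hat\psi^{*}(h)\bigl((-1)^{h}-1\bigr)$, which vanishes for even $h$ and, after inserting $\hat\psi^{*}(h)$ and rewriting the surviving factor through $\sin\tfrac{\pi h}{2}$, takes the form $2a_{h}(H)$ of \eqref{eq:definition-A}--\eqref{eq:vaaler-coef-a}; the parallel computation for $B_{H}$, where the even harmonics survive and are expressed through $\cos\tfrac{\pi h}{2}$, yields \eqref{eq:definition-B} and \eqref{eq:vaaler-coef-b}. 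The genuinely hard part is the cited construction itself: producing, for each $H$, a degree-$H$ polynomial whose error against $\psi$ is dominated pointwise by a nonnegative kernel and whose coefficients carry exactly the cotangent weight above --- this is the Beurling--Selberg extremal function theory, which I would import rather than reprove. Everything downstream of it, namely the sawtooth reduction, the triangle inequality, and the coefficient bookkeeping with its automatic parity cancellations, is routine.
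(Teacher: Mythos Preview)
Your approach and the paper's coincide. The paper's entire proof is the one-liner ``Apply Lemma~1 of \cite{mauduit_rivat2015:prime_numbers_along} with $\chi=2\chi_{1/2}-1$'': it imports the Vaaler approximation for the indicator $\chi_{1/2}=\mathbf{1}_{[0,1/2)}(\{\cdot\})$ and transfers via the affine identity $\chi=2\chi_{1/2}-1$, whereas you import Vaaler's approximation for the sawtooth $\psi$ and transfer via $\chi=2\psi(\cdot-\tfrac12)-2\psi$. Since $\chi_{1/2}=\tfrac12+\psi(\cdot-\tfrac12)-\psi$, these are the same reduction entered at adjacent points of the Beurling--Selberg--Vaaler theory; the paper simply delegates all bookkeeping to the cited lemma, while you sketch it.

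One caution on that bookkeeping: your computation gives the $e(hx)$-coefficient of $A_H$ as $2\hat\psi^{*}(h)\bigl((-1)^h-1\bigr)$, which is purely imaginary for odd $h$, as it must be since $\chi$ is odd. Read literally with $e(-h/2)=(-1)^h$ for integer $h$, formula \eqref{eq:vaaler-coef-a} gives real $a_h(H)$, hence an even $A_H$ that cannot approximate $\chi$. What your calculation (and the cited Lemma~1) actually produces is the phase $e(-h/4)$, coming from $\widehat{\chi_{1/2}}(h)=e(-h/4)\sin(\pi h/2)/(\pi h)$; so your method is sound, but the printed shift factor in \eqref{eq:vaaler-coef-a}--\eqref{eq:vaaler-coef-b} appears to be a misprint, and you should not expect your algebra to land on it verbatim.
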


Note that for $1\leq h\leq H$ we have
\begin{gather}
  \abs{a_h(H)}=\abs{a_{-h}(H)}\asymp \frac1h
  \quad\text{and}\quad
  \abs{b_h(H)}=\abs{b_{-h}(H)}\asymp \frac1{H}.
\end{gather}

\begin{proof}
  Apply Lemma 1 of \cite{mauduit_rivat2015:prime_numbers_along} with
  $\chi= 2\chi_{1/2}-1$.
\end{proof}

\subsection{Estimates for derivatives in Hardy fields}

Now we turn our attention to estimates of the growth of subpolynomial functions
and their derivatives.






\begin{lem}%
  \label{lem:higher_derivative_order}
  Let $f\in\mathcal{H}$ have type $x^{\ell+}$ with $\ell\geq0$. Then
  for all $j\in\N$ we have
  \[\frac{f(x)}{x^j\log^2(x)}\prec \abs{f^{(j)}(x)} \ll\frac{f(x)}{x^j}.\]
\end{lem}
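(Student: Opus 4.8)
The plan is to reduce everything to a single one-step estimate for the logarithmic derivative and then to chain $j$ such estimates, keeping careful track of where a logarithmic factor is lost. The backbone is the identity $\lim_{x\to\infty} xg'(x)/g(x)=\beta(g)$, valid for every $g\in\mathcal H$ with $\log\abs{g(x)}\to\pm\infty$; here I extend the growth exponent by setting $\beta(g):=\lim_{x\to\infty}\log\abs{g(x)}/\log x\in[-\infty,\infty]$ (which agrees with $\beta(f)$ on functions that grow). Writing the ratio as $(\log\abs g)'/(\log x)'$ and applying L'Hôpital for Hardy fields to $\log\abs g$ and $\log x$ (both tending to $\pm\infty$, so the ratio of derivatives has the same limit as $\log\abs g/\log x=\beta(g)$) gives the claim. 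From it I read off three facts: (i) since $\beta(g)$ is finite, $\abs{g'}\ll\abs g/x$; (ii) if moreover $\beta(g)\neq0$, then $\abs{g'}\asymp\abs g/x$ and $\beta(g')=\beta(g)-1$; and (iii) even when $\beta(g)=0$ one still has $\abs{g'}\succ\abs g/(x\log^2 x)$.

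Fact (iii) is the only place where the $\log^2$ is born, and I would prove it by the substitution $u=\log x$, $G(u)=\log\abs{g(e^u)}$, so that $xg'/g=G'(u)$. Here $G\in\mathcal H$ and $G\to\pm\infty$; if $G'$ did not satisfy $\abs{G'(u)}\succ 1/u^2$, then (the limit of $u^2\abs{G'}$ existing in a Hardy field) we would have $\abs{G'(u)}\ll 1/u^2$, forcing $\int^\infty\abs{G'}<\infty$ and hence $G$ bounded --- contradicting $G\to\pm\infty$. Thus $xg'/g=G'(u)\succ 1/\log^2 x$, which is exactly (iii).

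Before chaining I must check that the one-step estimates apply at every stage, i.e. that $\log\abs{f^{(m)}}\to\pm\infty$ for each $m$, equivalently that no derivative of $f$ tends to a nonzero finite constant. This is where the hypothesis that $f$ is of type $x^{\ell+}$ with the \emph{strict} relations $x^\ell\prec f\prec x^{\ell+1}$ is used: if $f^{(m)}\to c\neq0$, then integrating $m$ times gives $f\asymp x^m$ with $m=\beta(f)\in\{\ell,\ell+1\}$, contradicting either $x^\ell\prec f$ or $f\prec x^{\ell+1}$. Hence facts (i)--(iii) are available for each $g_i:=f^{(i-1)}$; moreover the order drops by one under each differentiation, so $\beta(f^{(i-1)})=\beta-(i-1)$, and therefore the ``bad'' step $\beta(g_i)=0$ occurs for \emph{at most one} index $i\in\{1,\dots,j\}$ (and only when $\beta$ is a nonnegative integer $<j$).

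Finally I would chain the estimates. For the upper bound, multiplying (i) over the $j$ steps gives $\abs{f^{(j)}}\ll f/x^j$ at once. For the lower bound, I apply the sharp relation (ii) at each good step and the lossy relation (iii) at the at most one bad step; since only a single step contributes a factor $\log^2 x$ (the remaining vanishing factor supplied by (iii), or by the leftover $1/\log^2 x$ when there is no bad step, securing a \emph{strict} inequality), the telescoping product yields $f/(x^j\log^2 x)\prec\abs{f^{(j)}}$ independently of $j$. I expect the main obstacle to be precisely this bookkeeping for the lower bound: establishing fact (iii) cleanly and verifying that the strict type hypothesis confines the logarithmic loss to one step, so that the final bound carries a single $\log^2 x$ rather than $\log^{2j} x$.
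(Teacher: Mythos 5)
The paper does not actually prove this lemma: its ``proof'' is the single line ``This is \cite[Corollary~2.3]{frantzikinakis2009:equidistribution_sparse_sequences}''. So your write-up is not so much a different route as a self-contained reconstruction of the argument behind the citation, and it is essentially correct: the backbone identity $\lim_{x\to\infty}xg'(x)/g(x)=\beta(g)$ (obtained because $xg'/g$ lies in a Hardy field, hence has a limit, which L'H\^opital identifies with $\lim\log\abs{g}/\log x$), the isolation of the single lossy step $\beta(g)=0$ as the sole source of the $\log^2$, and the use of the strict type hypothesis to rule out any derivative tending to a finite nonzero constant are exactly the right ingredients, and the final telescoping does give one factor $\log^2 x$ rather than $\log^{2j}x$. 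Two points should be tightened. First, in fact (iii) you substitute $u=\log x$ and assert that $u^2G'(u)$ with $G(u)=\log\abs{g(e^u)}$ has a limit ``in a Hardy field''; Hardy fields are not closed under composition, so it is safer to avoid the substitution and argue directly with $h(x)=x\log^2(x)\,g'(x)/g(x)$, which lies in the Hardy field obtained by adjoining $x$ and $\log x$ to the one containing $g$ (Rosenlicht/Boshernitzan) and therefore has a limit in $[-\infty,+\infty]$; if that limit were finite, $g'/g$ would be integrable and $\log\abs{g}$ would converge, contradicting $\log\abs{g}\to\pm\infty$. Second, the uniqueness of the bad step needs $\beta(g_{i+1})=\beta(g_i)-1$ at \emph{every} step, whereas your fact (ii) only covers the good steps; you should add that at a bad step facts (i) and (iii) together give $\log\abs{g'}=\log\abs{g}-\log x+O(\log\log x)$, hence $\beta(g')=-1$, so the exponent still drops by exactly one and a second bad step cannot occur. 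With these two repairs the argument is complete.
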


\begin{proof}
  This is \cite[Corollary 2.3]{frantzikinakis2009:equidistribution_sparse_sequences}.
\end{proof}



\subsection{Exponential sum estimates}
In this section we estimate the occurring exponential sums. To this end we state
two classical theorems all of which can be found in Chapter 2 of the book of
Graham and Kolesnik \cite{graham_kolesnik1991:van_der_corputs}.

We start with the Kusmin-Landau inequality for exponential sums (cf.
\cite[Theorem 2.1]{graham_kolesnik1991:van_der_corputs}), which we
will need for estimates based on the first derivative.

\begin{lem}[\textsc{Kusmin-Landau}]
  \label{lem:Kusmin_Landau}
  If $g$ is continuously differentiable, $g'$ is monotone, and
  $\norm{g'}\geq\lambda>0$ on $I$
  (where $\norm{t} = \min_{n\in\Z} \abs{t-n}$) then
  \[
    \abs{\sum_{n\in I}e(g(n))} \ll \lambda^{-1}.
  \]
\end{lem}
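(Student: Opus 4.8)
\emph{The final statement is the Kusmin--Landau inequality.} The plan is to prove it by the classical summation-by-parts argument, using the hypotheses on $g'$ twice: once to normalise the problem and once to make a difference sum collapse by telescoping. First I would exploit that $g'$ is continuous and monotone, so $g'(I)$ is an interval; since $\norm{g'}\geq\lambda$ forbids $g'$ from reaching any integer, $g'(I)$ must lie in a single strip $[k+\lambda,\,k+1-\lambda]$ for one integer $k$. Replacing $g(x)$ by $g(x)-kx$ leaves every $e(g(n))$ unchanged (because $e(-kn)=1$ for integers $k,n$), so I may assume from now on that
\[
  \lambda\leq g'(x)\leq 1-\lambda\qquad(x\in I).
\]

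Writing $I\cap\Z=\{N_1,N_1+1,\ldots,N_2\}$ and $\delta_n=g(n+1)-g(n)$, the mean value theorem gives $\delta_n=g'(\xi_n)$ for some $\xi_n\in(n,n+1)$, hence $\delta_n\in[\lambda,1-\lambda]$ stays bounded away from $0$ and $1$. Then I would use the exact identity
\[
  e(g(n))=c_n\bigl(e(g(n+1))-e(g(n))\bigr),\qquad c_n=\frac{1}{e(\delta_n)-1},
\]
together with the elementary computation $c_n=-\tfrac12-\tfrac{i}{2}\cot(\pi\delta_n)$, which yields $\abs{c_n}=\dfrac{1}{2\sin(\pi\delta_n)}\ll\lambda^{-1}$ since $\sin(\pi\delta_n)\geq\sin(\pi\lambda)\gg\lambda$. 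Keeping the top term $e(g(N_2))$ aside and applying summation by parts to $\sum_{n=N_1}^{N_2-1}c_n\bigl(e(g(n+1))-e(g(n))\bigr)$, and using only that $\abs{e(g(n))}=1$, I obtain
\[
  \Bigl|\sum_{n=N_1}^{N_2}e(g(n))\Bigr|
  \leq 1+\abs{c_{N_1}}+\abs{c_{N_2-1}}+\sum_{n=N_1+1}^{N_2-1}\abs{c_{n-1}-c_n}.
\]

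It remains to control the difference sum, and this is where the monotonicity of $g'$ is essential. Because $\delta_n=\int_n^{n+1}g'(t)\,dt$ is an average of the monotone function $g'$, the sequence $(\delta_n)_n$ is monotone, and since $t\mapsto\cot(\pi t)$ is monotone on $(0,1)$ the sequence $(\cot(\pi\delta_n))_n$ is monotone as well. As $c_{n-1}-c_n=-\tfrac{i}{2}\bigl(\cot(\pi\delta_{n-1})-\cot(\pi\delta_n)\bigr)$, the sum therefore telescopes,
\[
  \sum_{n=N_1+1}^{N_2-1}\abs{c_{n-1}-c_n}
  =\tfrac12\bigl|\cot(\pi\delta_{N_1})-\cot(\pi\delta_{N_2-1})\bigr|
  \leq\tfrac12\Bigl(\abs{\cot(\pi\delta_{N_1})}+\abs{\cot(\pi\delta_{N_2-1})}\Bigr)\ll\lambda^{-1},
\]
the last bound holding because $\abs{\cot(\pi\delta_n)}\leq\cot(\pi\lambda)\ll\lambda^{-1}$. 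Combining the three contributions gives $\abs{\sum_{n\in I}e(g(n))}\ll\lambda^{-1}$, as claimed. I expect the only delicate points to be the normalisation step, namely verifying that $g'$ cannot straddle an integer so that a single shift $g(x)\mapsto g(x)-kx$ suffices, and the observation that monotonicity of $g'$ passes to $(\cot(\pi\delta_n))_n$; it is precisely this that turns the difference sum into a telescoping one bounded by $\lambda^{-1}$ rather than by the length of $I$.
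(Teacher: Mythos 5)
Your proof is correct and is precisely the classical argument: the paper itself gives no proof of Lemma~\ref{lem:Kusmin_Landau}, deferring to Theorem~2.1 of Graham and Kolesnik, and your normalisation $\lambda\leq g'\leq 1-\lambda$, the identity $e(g(n))=c_n\bigl(e(g(n+1))-e(g(n))\bigr)$ with $c_n=-\tfrac12-\tfrac{i}{2}\cot(\pi\delta_n)$, and the telescoping of the monotone sequence $\bigl(\cot(\pi\delta_n)\bigr)_n$ after summation by parts is exactly the proof given there. All the delicate points you flag (the intermediate value argument preventing $g'$ from straddling an integer, monotonicity of $\delta_n=\int_n^{n+1}g'$, and $\abs{\cot(\pi\delta_n)}\leq\cot(\pi\lambda)\ll\lambda^{-1}$) are handled correctly.
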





The following lemma is an improved version of \cite[Lemma~2.5]{bergelson_kolesnik_madritsch+2014:uniform_distribution_prime} based on
\cite[Theorem 2.9]{graham_kolesnik1991:van_der_corputs}.

\begin{lem}
  [{\cite[Lemma 2.10]
  {bergelson_kolesnik_son2019:uniform_distribution_subpolynomial}}]
  \label{lem:BKS}
  Let $r\geq 2$ and denote $R=2^{r-1}$. Let $f(x)$ be an $r$-times continuously
  differentiable real function on $I=]X_1,X_1+X]\subset]X_1,2X_1]$, where
  $X,X_1\in\N$. Assume further that $f^{(r)}(x)$ is monotone on $I$ and
  $\lambda_r \leq \abs{f^{(r)}(x)}\leq \alpha_r\lambda_r$ for some
  $\lambda_r ,\alpha_r>0$. Then we have
  \begin{align*}
    \abs{\sum_{n\in I}e(f(n))}
    \ll X\left[(\alpha_r\lambda_r)^{\frac1{2R-2}}+(\lambda_r X^{r})^{-\frac1R}
    (\log X)^{\frac{r-1}R}+\left(\frac{\alpha_r \log^{r-1}X}{X}\right)^{\frac1R}\right],
  \end{align*}
  where the implied constant depends on $r$ only.
\end{lem}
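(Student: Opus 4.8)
The plan is to establish the bound by induction on $r$ via the classical van der Corput method: the base case is the second derivative test, and the inductive step is a single application of Weyl differencing (the $A$-process), following the approach of \cite{graham_kolesnik1991:van_der_corputs}.

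For the base case $r=2$ we have $R=2$ and $2R-2=2$, so the asserted estimate reads $X\bigl[(\alpha_2\lambda_2)^{1/2}+(\lambda_2X^2)^{-1/2}(\log X)^{1/2}+(\alpha_2\log X/X)^{1/2}\bigr]$. Since $X(\lambda_2X^2)^{-1/2}=\lambda_2^{-1/2}$, this is precisely the van der Corput second derivative test $\abs{\sum_{n\in I}e(f(n))}\ll X(\alpha_2\lambda_2)^{1/2}+\lambda_2^{-1/2}$, decorated with harmless $(\log X)^{1/2}$ and $\alpha_2^{1/2}$ factors. I would take this from \cite{graham_kolesnik1991:van_der_corputs}, or derive it directly: one Weyl--van der Corput step reduces $\abs{\sum_{n\in I}e(f(n))}^2$ to an average over shifts $q$ of sums with phase $f(n+q)-f(n)$, whose first derivative is monotone of size $\asymp q\lambda_2$, and which is then controlled by the Kusmin--Landau inequality of Lemma~\ref{lem:Kusmin_Landau}.

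For the inductive step, assume the statement for order $r-1$, writing $R'=2^{r-2}=R/2$. Applying the Weyl--van der Corput inequality with a parameter $Q\in\{1,\dots,X\}$ yields
\[\abs{\sum_{n\in I}e(f(n))}^2\ll\frac{X^2}{Q}+\frac{X}{Q}\sum_{q=1}^{Q-1}\abs{\sum_{n,\,n+q\in I}e\bigl(f(n+q)-f(n)\bigr)}.\]
Setting $g_q(x)=f(x+q)-f(x)$, we have $g_q^{(r-1)}(x)=\int_0^q f^{(r)}(x+t)\,dt$, whence $q\lambda_r\leq\abs{g_q^{(r-1)}(x)}\leq q\alpha_r\lambda_r$, and $g_q^{(r-1)}$ is monotone because $f^{(r)}$ is. Thus the induction hypothesis applies to each inner sum with $\lambda_{r-1}=q\lambda_r$ and $\alpha_{r-1}=\alpha_r$. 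One then substitutes, sums the three resulting families of terms over $1\leq q<Q$, and chooses $Q$ optimally; for instance, balancing $X^2/Q$ against the contribution of the leading term $(\alpha_r q\lambda_r)^{1/(2R'-2)}$ forces $Q\asymp(\alpha_r\lambda_r)^{-1/(2R'-1)}$, and since $4R'-2=2R-2$ the square root of $X^2/Q$ returns exactly $X(\alpha_r\lambda_r)^{1/(2R-2)}$, the first term of the conclusion.

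The main obstacle is the bookkeeping rather than any conceptual difficulty: one must track how each of the three terms of the order-$(r-1)$ bound transforms under summation over $q$ (each carries a different power of $q$, hence a different power of $Q$), verify that the logarithmic exponents grow correctly from $(r-2)/R'$ to $(r-1)/R$, carry the $\alpha_r$-dependence through, and check that the boundary cases of the optimization (when the optimal $Q$ is forced to $1$ or to $X$) are exactly what produce the second and third terms. The dyadic hypothesis $I\subset{]X_1,2X_1]}$ is what keeps all arguments comparable and thereby legitimizes the fixed factor $X^r$ throughout the recursion.
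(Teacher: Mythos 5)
The paper gives no proof of this lemma at all: it is stated as a citation of Lemma~2.10 of Bergelson--Kolesnik--Son, which in turn is a refinement of Theorem~2.9 of Graham--Kolesnik, and your proposal --- induction on $r$ via one Weyl--van der Corput differencing step per order, with the differenced first-derivative case handled by the Kusmin--Landau inequality (Lemma~\ref{lem:Kusmin_Landau}), the shifted phase satisfying $q\lambda_r\leq\abs{g_q^{(r-1)}}\leq q\alpha_r\lambda_r$ with monotonicity inherited from that of $f^{(r)}$, and the balancing $Q\asymp(\alpha_r\lambda_r)^{-1/(2R'-1)}$ with $4R'-2=2R-2$ --- is precisely the standard argument behind that cited result, and its exponent arithmetic checks out (including the base case $r=2$, where your direct Weyl-plus-Kusmin--Landau derivation correctly yields the $\alpha_2^{1/2}$-dependence together with the genuinely necessary third term $(X\alpha_2\log X)^{1/2}$, which the classical second-derivative test with factor $\alpha_2$ does not contain). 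So your proposal is correct and essentially reconstructs the proof of the cited source rather than diverging from the paper, with the one caveat that the optimization bookkeeping you defer --- checking that at $Q=\min\bigl(X,(\alpha_r\lambda_r)^{-1/(2R'-1)}\bigr)$ the propagated second term, which exceeds the target second term by a factor $(X/Q)^{1/R'}$ when $Q<X$, is absorbed by the first and third target terms --- is exactly where the cited proofs spend their effort, so it should not be dismissed as routine.
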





The final estimate is a lower bound for an exponential sum, which occurs in the
counterexample.

\begin{lem}\label{lem:robert}
  Let $f:\N \to \R^+$ be a monotonically
  increasing function. Furthermore let 
  \[
    f\left(\left\lfloor \frac{N}{4}\right\rfloor\right)\geq 4\pi\abs{h}
  \]
  with $h\in\Z$ and $N\in\N$. Then
  \[
    \abs{\sum_{n=1}^{N}e\left(\frac{h}{f(n)}\right)}\geq \frac{N}{8}.
  \]
\end{lem}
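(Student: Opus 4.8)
The plan is to bound the sum below by its real part and to use the hypothesis to show that an overwhelming proportion of the summands have real part close to $1$. Writing $e(t)=e^{2\pi i t}$, the real part of the $n$-th term equals $\cos\!\left(2\pi h/f(n)\right)$, and since $\abs{z}\geq\operatorname{Re}(z)$ for any complex $z$, it suffices to establish
\[
  \sum_{n=1}^{N}\cos\!\left(2\pi\frac{h}{f(n)}\right)\geq\frac N8 .
\]
The whole point is that the hypothesis $f(\floor{N/4})\geq 4\pi\abs{h}$ forces the phases $2\pi h/f(n)$ to be small for all but a bounded fraction of the indices $n$.

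First I would set $m=\floor{N/4}$ and exploit monotonicity together with the hypothesis: for every $n\geq m$ we have $f(n)\geq f(m)=f(\floor{N/4})\geq 4\pi\abs{h}$, whence $\abs{2\pi h/f(n)}\leq \tfrac12$. Because $\cos$ is decreasing on $[0,\pi]$ and $\tfrac12<\tfrac\pi3$, this yields $\cos\!\left(2\pi h/f(n)\right)\geq\cos\tfrac12>\tfrac12$ for all $n\geq m$. For the remaining indices $n<m$ I would simply use the trivial bound $\cos(\cdot)\geq -1$; monotonicity is exactly what allows us to give up control on this short initial segment without penalty.

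Splitting the sum at $m$ then gives
\[
  \sum_{n=1}^{N}\cos\!\left(2\pi\frac{h}{f(n)}\right)
  \geq -(m-1)+\frac12\,(N-m+1)
  =\frac N2-\frac{3m}{2}+\frac32 ,
\]
and inserting $m\leq N/4$ produces the lower bound $\tfrac N8+\tfrac32\geq\tfrac N8$, which completes the argument once combined with $\abs{z}\geq\operatorname{Re}(z)$. There is no deep obstacle here; the only genuine point of care is the bookkeeping, namely checking that the at most $N/4$ ``bad'' terms (each as small as $-1$) cannot overwhelm the at least $3N/4$ ``good'' terms (each at least $\tfrac12$) — the constant $\tfrac18$ is precisely what survives this worst-case cancellation, so the factor $4\pi$ in the hypothesis and the split at $N/4$ are matched to one another. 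One should also assume $N\geq 4$ so that $m=\floor{N/4}\geq 1$ and $f(m)$ is defined; for smaller $N$ the statement is either vacuous or immediate.
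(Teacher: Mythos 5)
Your proof is correct and follows essentially the same strategy as the paper: split the sum at $\lfloor N/4\rfloor$, bound the initial segment trivially, and use the hypothesis $f(\lfloor N/4\rfloor)\geq 4\pi\abs{h}$ to show each remaining term is close to $1$. The only cosmetic difference is that you pass to the real part and bound $\cos(2\pi h/f(n))\geq\cos(1/2)>\tfrac12$, whereas the paper keeps the modulus and uses $\abs{1-e(t)}\leq 2\pi\abs{t}$ together with monotonicity to control the deviation from $1$; the bookkeeping and the resulting constant $N/8$ are the same.
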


\begin{proof}
  Let $1\leq M\leq N$ be a positive integer we will choose in an instant. Then
  \begin{align*}
    \abs{\sum_{n=1}^{N}e\left(\frac{h}{f(n)}\right)}
    &=\abs{\sum_{n=M+1}^{N}e\left(\frac{h}{f(n)}\right)-\sum_{n=1}^{M}\left(-e\left(\frac{h}{f(n)}\right)\right)}\\
    &\geq\abs{\sum_{n=M+1}^{N}e\left(\frac{h}{f(n)}\right)} - M\\
    &=\abs{\sum_{n=M+1}^N1-\sum_{n=M+1}^N\left(1-e\left(\frac{h}{f(n)}\right)\right)} - M\\
    &\geq(N-M)-\sum_{n=M+1}^N\abs{e\left(\frac{h}{f(n)}\right)-1} - M\\
    &\geq(N-M)-2\pi\abs{h}\sum_{n=M+1}^N\frac{1}{f(n)} - M\\
    &\geq(N-M)-2\pi\abs{h}\frac{N-M}{f(M)} - M.
  \end{align*}
  By choosing $M=\lfloor N/4\rfloor$ we obtain that the last line is
  \begin{align*}
    &\geq N-M-\frac{N-M}{2}-M=\frac{N-M}2-M\\
    &\geq\frac{N-N/4}{2}-\frac{N}4=\frac{N}8.
  \end{align*}
\end{proof}

\section{Well-distribution}

Let $f\in\mathcal{H}$ be a function of type $x^{\ell+}$ with $\ell\geq0$.
Without loss of generality we may assume that $f$ is eventually increasing.
The idea is that we first show an estimate of the sum
\[\abs{\sum_{an+b\leq x}e_{an+b}}\]
for an arbitrary $b\in\Z$ and $a,x\in\N$ such that $1\leq a+b\leq ax+b\leq N$
and $a$ is ``small''. Then we vary over all possible $a$, $b$ and $x$ in that range.
Finally we show that the sum stays small if $a$ is ``large''. 

For given $a$, $b$ and $x$ we set $M=\left\lfloor\frac{x-b}{a}\right\rfloor$ for
short. Thus by the definition of $e_n$ and $\chi$ we get
\[
  \sum_{n\leq M}e_{an+b} =\sum_{n\leq M}
    \left(\mathbf{1}_{\{f(an+b)\}<\tfrac12}-\frac12\right)
  -\sum_{n\leq M}
    \left(\mathbf{1}_{\{f(an+b)\}\geq\tfrac12}-\frac12\right).
\]
Furthermore by the definition of the discrepancy we obtain
\[\abs{\sum_{n\leq M}e_{an+b}}\leq 2M D_M(x_1,\ldots,x_M),\]
where we wrote $x_n=f(an+b)$ for short.

By Lemma~\ref{lem:erdos-turan}, for any integer $H\geq 1$ we have that
\begin{gather}\label{eq:e_well-distribution}
  \abs{\sum_{n=1}^{M} e_{an+b}}
  \leq
  \frac{2M}{H+1}+2\sum_{h=1}^{H}
  \frac1h
  \abs{\sum_{n=1}^{M} e(hf(an+b))}.
\end{gather}

Next we consider the inner sum.
We split the summation over $n$ into $Q$ (to be chosen later)
intervals of the form $]M/2^q,M/2^{q-1}]$:
\begin{displaymath}
  \abs{\sum_{n=1}^{M} e(hf(an+b))}
  \leq
  \abs{\sum_{n\leq M/2^{Q+1}} e(hf(an+b))}
  +
  \sum_{0\leq q \leq Q}
  \abs{\sum_{M/2^{q+1}<n\leq M/2^{q}} e(hf(an+b))}
  .
\end{displaymath}
We majorize trivially the first sum.
This will produce in \eqref{eq:e_well-distribution} an error term
\begin{math}
  \ll \frac{M}{2^Q} \log H
  .
\end{math}

Now we focus on the exponential sum
\begin{displaymath}
    \sum_{P_q<n\leq 2P_q}e\left(g(n)\right)
\end{displaymath}
with $g(x)=hf(ax+b)$ and $P_q=M2^{-q-1}$.

Since $g^{(j)}(n)=ha^j f^{(j)}(an+b)$,
by Lemma \ref{lem:higher_derivative_order}
we get for $j\geq0$
\[\frac{\abs{h}f(an+b)}{n^j\log^2(an)}
\ll \abs{g^{(j)}(n)}
\ll \frac{\abs{h}f(an+b)}{n^j}.\]

Because of $\beta=\beta(f):=\inf\{c\in[0,\infty[\colon f(x)\prec x^c\}$
and $r=\left\lceil \beta+\tfrac12\right\rceil$,
for arbitrary $\varepsilon$ with $0<\varepsilon<1/8$ we have
\[
  x^{\beta-\varepsilon}\ll \abs{f(x)}\ll x^{\beta+\varepsilon},
\]
where the implied constant may depend on $\varepsilon$.

Since $f$ is assumed to be eventually increasing, we have
$f(aP_q+b)\ll f(2aP_q+b)$. Putting everything together we get
for $r>\beta$ and $n\in]P_q,2P_q]$ that

\[\lambda_r \ll\abs{g^{(r)}(n)}\ll\alpha_r\lambda_r\]
with
\begin{gather}\label{eq:lambda_and_alpha}
  \lambda_r=\abs{h}(aP_q)^{\beta-\varepsilon}P_q^{-r}
  \quad\text{and}\quad
  \alpha_r=(aP_q)^{2\varepsilon}.
\end{gather}

Next we distinguish three cases
$0 < \beta\leq \tfrac12$, $\tfrac12<\beta<1$ and
$1\leq \beta$.

\begin{lem}\label{lem:well-distribution-small-beta}
  Suppose that $\beta(f)\leq\tfrac12$, then, taking
  $\sigma = \min\left(\frac15,\beta\right)$,
  for arbitrary $\varepsilon>0$ we have
  \[
    \abs{\sum_{n\leq M}e_{an+b}}
    \ll
    M^{1-\sigma+\varepsilon}
    .
  \]
\end{lem}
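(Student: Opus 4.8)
The plan is to feed the exponential sums produced by the Erdős–Turán inequality \eqref{eq:e_well-distribution} into the Kusmin–Landau inequality (Lemma~\ref{lem:Kusmin_Landau}), which is precisely the first–derivative test matching $r=\lceil\beta+\tfrac12\rceil=1$ in the regime $\beta\le\tfrac12$. First I would record that $g(x)=hf(ax+b)$ has $g'(x)=haf'(ax+b)$ and $g''(x)=ha^2f''(ax+b)$, so that $g'$ is eventually monotone (because $f''\in\mathcal H$ has eventually constant sign); the finitely many $n$ for which $an+b$ lies below the threshold of monotonicity contribute $O(1)$ and are harmless. On a dyadic block $]P_q,2P_q]$, Lemma~\ref{lem:higher_derivative_order} together with \eqref{eq:lambda_and_alpha} gives $\abs{g'(n)}\asymp\lambda_1$ with $\lambda_1\asymp\abs{h}a^{\beta}P_q^{\beta-1}$ (up to the harmless factor $\alpha_1=(aP_q)^{2\varepsilon}$), and the same estimate applied to $g''$ shows that the total variation of $g'$ across the block is also $\asymp\lambda_1$.

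This self–similarity suggests splitting the dyadic blocks according to the size of $\lambda_1$, i.e.\ at the threshold $P^{\ast}\asymp(\abs{h}a^{\beta})^{1/(1-\beta)}$. For the blocks with $P_q>P^{\ast}$ the derivative is small, $0<g'<\tfrac12$, so $\norm{g'}=\abs{g'}\ge\lambda_1$ and Lemma~\ref{lem:Kusmin_Landau} yields a block bound $\ll\lambda_1^{-1}\asymp P_q^{1-\beta}/(\abs{h}a^{\beta})$. Since $1-\beta>0$ this is increasing in $P_q$, so the geometric sum over $q$ is dominated by the top block and contributes $\ll M^{1-\beta+\varepsilon}/(\abs{h}a^{\beta})$; after the weighted sum $\sum_{h\le H}h^{-1}$ from \eqref{eq:e_well-distribution} this leaves a term of size $\ll M^{1-\beta+\varepsilon}$, which is already $\ll M^{1-\sigma+\varepsilon}$ because $\sigma\le\beta$.

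The delicate part is the blocks with $P_q\le P^{\ast}$, where $g'$ is large and sweeps through $\asymp\lambda_1$ integers, so $\norm{g'}$ vanishes at each crossing and Lemma~\ref{lem:Kusmin_Landau} cannot be applied to the whole block. Here I would either bound such a block trivially by its length $P_q$, or partition it at the integer crossings of $g'$ and apply Kusmin–Landau on the sub–intervals on which $\norm{g'}\ge\tfrac14$; either way the total over these blocks is governed by the single largest one, of length $\asymp P^{\ast}$, and summing $\sum_{h\le H}h^{-1}P^{\ast}$ produces a term $\ll(Ha^{\beta})^{1/(1-\beta)}$. This is the main obstacle: with only first–derivative information one cannot extract cancellation near the stationary crossings, so this term is controlled purely by its size, and it is precisely this term that forces $a$ (and $h\le H$) to stay in a restricted range and dictates the non-optimal value $\sigma=\min(\tfrac15,\beta)$.

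It remains to collect the error contributions — $\tfrac{M}{H}$ from \eqref{eq:e_well-distribution}, $\tfrac{M}{2^{Q}}\log H$ from the discarded short initial segment, and the two block sums above — and to choose the free parameters. Taking $Q\asymp\log M$ kills the second term, and taking $H\asymp M^{\sigma}$ balances $\tfrac MH\asymp M^{1-\sigma}$ against $(Ha^{\beta})^{1/(1-\beta)}$; a short check using $\beta\le\tfrac12$ (so that $1/(1-\beta)\le2$) shows that with $\sigma=\min(\tfrac15,\beta)$ every term is $\ll M^{1-\sigma+\varepsilon}$ in the admissible range of $a$, which is what the lemma asserts. All implied constants depend only on $f$ and $\varepsilon$, the $\varepsilon$ absorbing both the logarithmic factors and the $\varepsilon$–losses in the derivative estimates.
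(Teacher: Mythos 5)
Your proposal follows the paper's route almost exactly: Erd\H{o}s--Tur\'an, a dyadic decomposition, Kusmin--Landau on the blocks where $\abs{g'}\le\tfrac12$, and a trivial bound on the range where the first derivative is too large for that test. The only structural difference is bookkeeping: the paper chooses the dyadic cutoff $Q$ uniformly in $h$, so that $HP_Q^{\beta-1+\varepsilon}\le\tfrac12$ forces the Kusmin--Landau hypothesis on \emph{every} retained block and the bad range is swallowed by the trivially bounded initial segment $n\le M2^{-Q-1}$ (costing $M2^{-Q}\log H$), whereas you use the $h$-dependent threshold $P^{\ast}(h)\asymp(\abs{h}a^{\beta})^{1/(1-\beta)}$ and take $2^{Q}\ge M$. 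After the weighted sum over $h\le H$ the two devices produce essentially the same secondary term, and your main term $a^{-\beta}M^{1-\beta+\varepsilon}$, the convergence of $\sum_h h^{-2}$, and the choice $\sigma=\min(\tfrac15,\beta)$ all match the paper.

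The one point you must tighten is uniformity in $a$. The lemma carries no restriction on $a$, and in the proof of Theorem~\ref{thm:well-distribution} Case~1 it is invoked for \emph{all} $a$ --- unlike Lemmas~\ref{lem:well-distribution-medium-beta} and \ref{lem:well-distribution-large-beta}, which have explicit ranges for $a$ and are complemented by a separate trivial bound when $a$ is large. Your term $(Ha^{\beta})^{1/(1-\beta)}=(M^{\sigma}a^{\beta})^{1/(1-\beta)}$ exceeds $M^{1-\sigma}$ once $a$ is a sufficiently large power of $M$ (for instance $\beta=\tfrac12$, $\sigma=\tfrac15$, $a\gg M^{2/5}$), so the phrase ``in the admissible range of $a$'' is doing real work: you need either to make that range explicit and supply a separate argument outside it, or to show the term can be avoided. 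To be fair, the paper's own write-up has the same soft spot --- the step $\alpha_1\lambda_1=\abs{h}(aP_q)^{\beta+\varepsilon}P_q^{-1}\ll HP_q^{\beta-1+\varepsilon}$ silently discards the factor $a^{\beta+\varepsilon}$, which is only legitimate for bounded $a$ --- and your more explicit accounting merely makes the issue visible. Apart from this, your argument (monotonicity of $g'$ from $f''\in\mathcal{H}$, geometric summation of the block bounds, the final balancing of $M/H$ against the remaining terms) is sound and is the paper's argument in all but presentation.
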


\begin{proof}
  In this case $r=1$ and by \eqref{eq:lambda_and_alpha} we have that
  \[
    \lambda_{1}^{-1}=\frac{P_q}{\abs{h}(aP_q)^{\beta-\varepsilon}}
    ,
  \]
  and since $\beta-1+\varepsilon < 0$ we have
  \begin{displaymath}
    \alpha_1 \lambda_1
    =
    \abs{h}(aP_q)^{\beta+\varepsilon} P_q^{-1}
    \ll
    H P_q^{\beta-1+\varepsilon} 
    \ll
    H P_Q^{\beta-1+\varepsilon}
    .
  \end{displaymath}
  Choosing $H = \floor{P_Q^{1-\beta-2\varepsilon}}$
  and
  \begin{displaymath}
    Q
    =
    \floor{
      \frac{(1-\beta-2\varepsilon)\log M}{(2-\beta-2\varepsilon) \log 2}
    }
  \end{displaymath}
  so that
  \begin{displaymath}
    2^{Q} \leq M^{\frac{1-\beta-2\varepsilon}{2-\beta-2\varepsilon}}
  \end{displaymath}
  we have $\alpha_1\lambda_1 \to 0$ so that for sufficiently large $M$
  we will have $\alpha_1\lambda_1 \leq 1/2$ and we can apply
  Lemma~\ref{lem:Kusmin_Landau} and we obtain
  \begin{align*}
    \sum_{1\leq q\leq Q}
    \abs{\sum_{M/2^{q+1}<n \leq M/2^q} e(g(n))}
    \ll
    \frac{M}{\abs{h}(aM)^{\beta-\varepsilon}}
    \sum_{1\leq q\leq Q}
    2^{-q(1-\beta+\varepsilon)}
    \ll \frac{M}{\abs{h}(aM)^{\beta-\varepsilon}}.
  \end{align*}

  Plugging this into \eqref{eq:e_well-distribution} we get
  \begin{align*}
    \abs{\sum_{n\leq M}e_{an+b}}
    \ll
    \frac{M}{H} + \frac{M}{2^Q}\log H
    + a^{-\beta+\varepsilon}M^{1-\beta+\varepsilon}.
  \end{align*}
  Replacing $H$ and $Q$ by their values given above we get
  \begin{displaymath}
    \abs{\sum_{n\leq M}e_{an+b}}
    \ll
    M^{\beta+2\varepsilon}
    M^{\frac{(1-\beta-2\varepsilon)^2}{2-\beta-2\varepsilon}}  \log H
    + a^{-\beta+\varepsilon}M^{1-\beta+\varepsilon}
    \ll
    M^{\frac{1-2\beta\varepsilon}{2-\beta-2\varepsilon}}  \log M
    + M^{1-\beta+\varepsilon}
  \end{displaymath}  
  and we obtain the lemma by taking $\sigma = \min(\frac15,\beta)$.
\end{proof}

\begin{lem}\label{lem:well-distribution-medium-beta}
  Suppose that $\tfrac12<\beta(f)<1$ and $a\ll M^{\frac{2-\beta}{\beta}}$, then
  \[
    \abs{\sum_{n\leq M}e_{an+b}}\ll a^{\beta/2}M^{(1+\beta)/2}
  \]
\end{lem}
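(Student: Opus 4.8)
The plan is to run the machinery already assembled before the statement, specialised to the band $\tfrac12<\beta<1$. Here $r=\lceil\beta+\tfrac12\rceil=2$ and $R=2^{r-1}=2$, so we are exactly in the second-derivative (van der Corput) range and the natural tool is Lemma~\ref{lem:BKS} with $r=2$. Starting from the Erdős–Turán bound \eqref{eq:e_well-distribution} and the dyadic splitting of $\sum_{n=1}^{M}e(hf(an+b))$ into the blocks $]P_q,2P_q]$ with $P_q=M2^{-q-1}$, I would apply Lemma~\ref{lem:BKS} on each block with $X=P_q$ and with $\lambda_2,\alpha_2$ taken from \eqref{eq:lambda_and_alpha}. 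Since $R=2$, its three terms become, up to the $\varepsilon$- and logarithmic losses carried in \eqref{eq:lambda_and_alpha}, a main term $\asymp\abs{h}^{1/2}(aP_q)^{\beta/2}$, an endpoint term $\asymp\abs{h}^{-1/2}a^{-\beta/2}P_q^{1-\beta/2}$, and a lower-order term $\asymp(aP_q)^{\varepsilon}P_q^{1/2}$.

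The decisive point is that the hypothesis $a\ll M^{(2-\beta)/\beta}$ is, up to $\varepsilon$, precisely the inequality $(aM)^{\beta}\ll M^{2}$, that is $\alpha_2\lambda_2\ll1$ at the top scale $P_q\asymp M$ for $h=1$. This is exactly the condition that the second-derivative estimate beats the trivial bound at the largest scale, and without it the method produces no saving. The blocks on which $\alpha_2\lambda_2\gtrsim1$ lie below a threshold scale $P^{*}(h)\asymp(\abs{h}a^{\beta})^{1/(2-\beta)}$; on those I would discard Lemma~\ref{lem:BKS} and bound the exponential sum trivially, the telescoping sum of their lengths being $\ll P^{*}(h)$. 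On the remaining blocks each of the three terms above is an increasing power of $P_q$, so the geometric sum over $0\le q\le Q$ is dominated by the top scale $P_q\asymp M$, giving for each fixed $h$ a contribution $\ll\abs{h}^{1/2}(aM)^{\beta/2}+\abs{h}^{-1/2}a^{-\beta/2}M^{1-\beta/2}+M^{1/2+o(1)}+P^{*}(h)$.

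It then remains to perform the $1/h$-weighted summation over $1\le h\le H$ in \eqref{eq:e_well-distribution} and to choose $H$ and $Q$. The main term gives $\ll(aM)^{\beta/2}\sum_{h\le H}h^{-1/2}\ll(aM)^{\beta/2}H^{1/2}$, the endpoint term the convergent $\ll a^{-\beta/2}M^{1-\beta/2}$, and the trivial threshold term $\sum_{h\le H}h^{-1}P^{*}(h)\ll(a^{\beta}H)^{1/(2-\beta)}$. Taking $Q$ large enough that the tail $\ll 2^{-Q}M\log H$ is harmless and choosing $H$ (a power of $M$, with $H\ll M$) to balance $M/H$ against the van der Corput main term, I would obtain $\ll a^{\beta/2}M^{(1+\beta)/2}$; the endpoint term is automatically dominated because $\beta>\tfrac12$ forces $1-\tfrac{\beta}{2}<\tfrac{1+\beta}{2}$, and the threshold term is dominated exactly by the hypothesis $a\ll M^{(2-\beta)/\beta}$. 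The step I expect to be most delicate is precisely this last bookkeeping: verifying that the single constraint on $a$ simultaneously keeps the trivial small-scale contribution $(a^{\beta}H)^{1/(2-\beta)}$ and all remaining error terms below $a^{\beta/2}M^{(1+\beta)/2}$, while absorbing the $\varepsilon$- and logarithmic-factor losses inherited from Lemma~\ref{lem:higher_derivative_order} into the implied constant.
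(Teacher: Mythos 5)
Your proposal follows the paper's own route: the Erd\H{o}s--Tur\'an bound \eqref{eq:e_well-distribution}, the dyadic decomposition into blocks $]P_q,2P_q]$, Lemma~\ref{lem:BKS} with $r=R=2$ and the parameters of \eqref{eq:lambda_and_alpha}, domination of the dyadic sum by the top scale, and a final choice of $H$ balancing $M/H$ against the $h$-summed main term $\asymp (aM)^{\beta/2}H^{1/2}$. Your additional bookkeeping---the trivial bound below the threshold $P^{*}(h)$, and the explicit checks that the endpoint and small-scale contributions are dominated using $\beta>\tfrac12$ and $a\ll M^{(2-\beta)/\beta}$ respectively---is correct and in fact more careful than the paper's proof, which simply retains the first term from Lemma~\ref{lem:BKS} and sets $H=\lceil M^{3/2}(aM)^{-\beta/2}\rceil$.
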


\begin{proof}
  Here we clearly have $r=2$. Now an application of Lemma \ref{lem:BKS}
  together with $\lambda$ and $\alpha$ as in \eqref{eq:lambda_and_alpha} yields
  \begin{align*}
    \abs{\sum_{P_q< n \leq 2P_q}e(g(n))}
    &\ll P_q\left[
      \left(\abs{h}a^{\beta+\varepsilon}\right)^{\frac{1}{2}}P_q^{\frac{\beta-2}{2}+\varepsilon}
      +\left(\abs{h}a^{\beta-\varepsilon}\right)^{-\frac{1}{R}}P_q^{-\frac{\beta}{2}+\varepsilon}
      +a^\varepsilon P_q^{-\frac{1}{2}+\varepsilon}
    \right]\\
    &\ll P_q^{\frac{\beta}{2}+\varepsilon}\left[
      \left(\abs{h}a^{\beta+\varepsilon}\right)^{\frac{1}{2}}
      +\left(\abs{h}a^{\beta-\varepsilon}\right)^{-\frac{1}{R}}
      +a^\varepsilon
    \right]\\
    &\ll \left(\abs{h}a^{\beta+\varepsilon}\right)^{\frac{1}{2}}P_q^{\frac{\beta}{2}+\varepsilon}.
  \end{align*}

  Summing over all $P_q=M2^{-q}$ for $1\leq q\leq Q$ we get
  \begin{gather*}
    \abs{\sum_{n\leq M}e(g(n))}
    \ll\sum_{1\leq q\leq Q}
      \left(\abs{h}a^{\beta+\varepsilon}\right)^{\frac{1}{2}}\left(M2^{-q}\right)^{\frac{\beta}{2}+\varepsilon}
    \ll\left(\abs{h}a^{\beta+\varepsilon}\right)^{\frac{1}{2}}M^{\frac{\beta}{2}+\varepsilon}.
  \end{gather*}

  Plugging this into \eqref{eq:e_well-distribution} yields
  \[
    \abs{\sum_{n\leq M}e_{an+b}}
    \ll MH^{-1}+\left(Ha^{\beta+\varepsilon}\right)^{\frac{1}{2}}M^{\frac{\beta}{2}+\varepsilon}.
  \]

  This time we set
  \[
    H=\left\lceil M^{\frac32}(aM)^{-\frac{\beta}2}\right\rceil\geq1
  \]
  and obtain the lemma.
\end{proof}

\begin{lem}\label{lem:well-distribution-large-beta}
  Suppose that $1<\beta(f)$ and $a\ll M^{r/\beta-1}$, then
  \[
    \abs{\sum_{n\leq M}e_{an+b}}\ll a^{\beta/(2R-1)}M^{1-\frac{r-\beta}{2R-1}+\varepsilon},
  \]
  where $R=2^{r-1}$.
\end{lem}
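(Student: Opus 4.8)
The plan is to follow closely the scheme of Lemmas~\ref{lem:well-distribution-small-beta} and~\ref{lem:well-distribution-medium-beta}, now invoking the general van der Corput estimate of Lemma~\ref{lem:BKS} at level $r=\lceil\beta+\tfrac12\rceil$, so that $R=2^{r-1}$ and, since $\beta>1$, we have $r\geq2$ (indeed $\beta<r<\beta+\tfrac32$). On each dyadic block $]P_q,2P_q]$ the derivative $g^{(r)}$ is eventually monotone (as $g^{(r)}\in\mathcal H$ has constant sign) and satisfies $\lambda_r\ll\abs{g^{(r)}(n)}\ll\alpha_r\lambda_r$ with $\lambda_r,\alpha_r$ as in \eqref{eq:lambda_and_alpha}, so the hypotheses of Lemma~\ref{lem:BKS} hold with $X=X_1=P_q$. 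Substituting these values produces a three-term bound for $\abs{\sum_{P_q<n\leq2P_q}e(g(n))}$.

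First I would sum over the blocks $1\leq q\leq Q$ with $Q\asymp\log M$, so that the trivially bounded tail contributes only $\ll(M/2^Q)\log H\ll\log M$ in \eqref{eq:e_well-distribution}. After multiplying by the block length $P_q$, the three terms carry the $P_q$-exponents $1+\tfrac{\beta-r}{2R-2}$, $1-\tfrac{\beta}{R}$ and $1-\tfrac1R$; each is positive (using $r<\beta+\tfrac32$ and $R\geq2$), so every geometric sum over $q$ is dominated by its largest block $P_q\asymp M$. This reduces the analysis to the single scale $P_q\asymp M$.

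Next I would insert the resulting bound for $\abs{\sum_{n\leq M}e(hf(an+b))}$ into \eqref{eq:e_well-distribution} and sum over $1\leq h\leq H$ against the weight $1/h$. The main (first) term carries $h^{1/(2R-2)}$, so $\sum_h h^{1/(2R-2)-1}\ll H^{1/(2R-2)}$; the second carries $h^{-1/R}$, so $\sum_h h^{-1-1/R}\ll1$; the third is $h$-free and costs only a factor $\log H$. Balancing $M/H$ against the main contribution $a^{(\beta+\varepsilon)/(2R-2)}M^{1+(\beta+\varepsilon-r)/(2R-2)}H^{1/(2R-2)}$ dictates the choice $H=\floor{a^{-(\beta+\varepsilon)/(2R-1)}M^{(r-\beta-\varepsilon)/(2R-1)}}$, whence $M/H\ll a^{\beta/(2R-1)}M^{1-(r-\beta)/(2R-1)+\varepsilon}$, exactly the claimed exponent. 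This $H$ satisfies $H\geq1$ precisely because $a\ll M^{r/\beta-1}$, which is where the hypothesis on $a$ enters.

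I expect the main obstacle to be checking that the two remaining terms are negligible against this balanced main term. Their $M$-exponents $1-\tfrac{\beta-\varepsilon}{R}$ and $1-\tfrac1R$ must each be at most $1-\tfrac{r-\beta}{2R-1}$; using $r<\beta+\tfrac32$ these comparisons reduce to $\beta>1$ and $R\geq2$ respectively, both of which hold, while the accompanying powers of $a$ are harmless since $a\geq1$ together with $a\ll M^{r/\beta-1}$. Absorbing the logarithmic factors and the various $\varepsilon$-losses into the final $+\varepsilon$ then completes the argument.
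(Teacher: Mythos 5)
Your proposal is correct and follows essentially the same route as the paper: dyadic decomposition, Lemma~\ref{lem:BKS} with the $\lambda_r,\alpha_r$ of \eqref{eq:lambda_and_alpha}, summation over $q$ and $h$ via Lemma~\ref{lem:erdos-turan}, and the same balancing choice $H\asymp(a^{\beta}M^{\beta-r})^{-1/(2R-1)}$, with $H\geq1$ coming from $a\ll M^{r/\beta-1}$. The only cosmetic difference is that you track the two secondary terms of the van der Corput bound to the end and verify they are dominated, whereas the paper absorbs them into the leading term already inside the bracket.
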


\begin{proof}
  In this case $r\geq2$. Similar to above we apply Lemma \ref{lem:BKS}
  with $\lambda_r$ and $\alpha_r$ as in \eqref{eq:lambda_and_alpha} and get
  \begin{align*}
    \abs{\sum_{P_q<n\leq 2P_q}e(g(n))}
    &\ll P_q\left[
      \left(\abs{h}a^{\beta+\varepsilon}\right)^{\frac{1}{2R-2}}P_q^{\frac{\beta-r}{2R-2}+\varepsilon}
      +\left(\abs{h}a^{\beta-\varepsilon}\right)^{-\frac{1}{R}}P_q^{-\frac{\beta}{R}+\varepsilon}
      +a^\varepsilon P_q^{-\frac{1}{R}+\varepsilon}
    \right]\\
    &\ll P_q^{1-\frac{r-\beta}{2R-2}+\varepsilon}\left[
      \left(\abs{h}a^{\beta+\varepsilon}\right)^{\frac{1}{2R-2}}
      +\left(\abs{h}a^{\beta-\varepsilon}\right)^{-\frac{1}{R}}
      +a^\varepsilon\right]\\
    &\ll \left(\abs{h}a^{\beta+\varepsilon}\right)^{\frac{1}{2R-2}}P_q^{1-\frac{r-\beta}{2R-2}+\varepsilon}
  \end{align*}
  where $R=2^{r-1}$.

  Summing over all $P_q=M2^{-q}$ for $1\leq q\leq Q$,
  we get
  \begin{gather*}
    \abs{\sum_{n\leq M}e(g(n))}
    \ll\sum_{1\leq q\leq Q}\left(\abs{h}a^{\beta+\varepsilon}\right)^{\frac{1}{2R-2}}
      \left(M2^{-q}\right)^{1-\frac{r-\beta}{2R-2}+\varepsilon}
    \ll\left(\abs{h}a^{\beta+\varepsilon}\right)^{\frac{1}{2R-2}}
      M^{1-\frac{r-\beta}{2R-2}+\varepsilon}.
  \end{gather*}

  Plugging into \eqref{eq:e_well-distribution} yields
  \[
    \abs{\sum_{n\leq M}e_{an+b}}
    \ll
    \frac{M}{H}
    +\left(Ha^{\beta+\varepsilon}\right)^{\frac{1}{2R-2}}
    M^{1-\frac{r-\beta}{2R-2}+\varepsilon}.
  \]
  
  Setting
  \[
    H=\left\lceil (a^\beta M^{\beta-r})^{-\frac{1}{2R-1}}\right\rceil\geq1
  \]
  yields the lemma in this case.
\end{proof}

For the rest of the proof of Theorem \ref{thm:well-distribution} we need to
specify, what we mean by ``small'' $a$. Therefore we consider the cases
from above again.

\begin{itemize}
\item \textbf{Case 1:} $\beta(f)\leq\tfrac12$. In this case we may always apply
Lemma \ref{lem:well-distribution-small-beta} and the theorem follows.
\item \textbf{Case 2:} $\tfrac12<\beta(f)$. On the one hand if $a\ll
  x^{1-\frac{\beta}{r}}$, then either Lemma
  \ref{lem:well-distribution-medium-beta} or Lemma
  \ref{lem:well-distribution-large-beta} proves the theorem. On the other hand
  if $a\gg x^{1-\frac{\beta}{r}}$, then the trivial bound yields
  \[
    \abs{\sum_{an+b\leq x}e_{an+b}}\ll x^{\beta/r}.
  \]
  For $r\geq2$ we have $r\leq 2R-1$. Therefore
  $\frac{r}{2R-1}\left(1-\frac{\beta}{r}\right)\leq 1-\frac{\beta}{r}$. Thus
  $\beta/r\leq 1-(r-\beta)/(2R-1)$ and the theorem follows
  also in this case.
\end{itemize}

\section{Correlation measure}

Let $f\in\mathcal{B}$ be of type $x^{\ell+}$ with $\ell\geq1$, $\beta=\beta(f)$.
Let $M$ be a positive integer and $\mathbf{d}=(d_1,\ldots,d_s)\in\N^s$ be such
that
\begin{equation}
  \label{eq:condition-d1-ds}
  0\leq d_1<\cdots<d_s\leq N-M.
\end{equation}
Then we need to estimate the correlation sum
\[
  \sum_{n\leq M}e_{n+d_1}\cdots e_{n+d_s}.
\]
Without loss of generality we may assume that $M\geq N^{\frac{9}{10}}$.

For a given $n\leq M$ we have
\[
  e_{n+d_1}\cdots e_{n+d_s}=\sum_{\mathbf{a}\in\{-1,1\}^s}\prod_{j=1}^s
  a_j\mathbf{1}_{\chi(f(n+d_j))=a_j},
\]
where the sum runs over all possible combinations
$\mathbf{a}=(a_1,\ldots,a_s)\in\{-1,1\}^s$. Thus summing over all possible
$n\leq M$ we obtain
\begin{align*}
  \sum_{n\leq M}e_{n+d_1}\cdots e_{n+d_s}
  &=\sum_{\mathbf{a}\in\{-1,1\}^s}\prod_{j=1}^s a_j
    \sum_{n\leq M}\left(\prod_{j=1}^s\mathbf{1}_{\chi(f(n+d_j))=a_j}-2^{-s}\right)
    .
\end{align*}
Writing $\mathbf{x}_n=(f(n+d_1),\ldots,f(n+d_s))$ for $1\leq n\leq M$ we
furthermore get
\[\abs{\sum_{n\leq M}e_{n+d_1}\cdots e_{n+d_s}}
  \leq 2^sMD_M(\mathbf{x}_1,\ldots,\mathbf{x}_M).\]

Now we use our multidimensional discrepancy estimate from Lemma
\ref{lem:koksma-szusz} and get for $H=\lfloor M^{\frac1{10}}\rfloor$
\begin{gather}\label{eq:correlation_koksma-szusz}
  \abs{\sum_{n\leq M}e_{n+d_1}\cdots e_{n+d_s}}
  \ll M^{1-\frac1{10}}+\sum_{0<\varphi(\mathbf{h})\leq H}\frac{1}{\mathbf{r}(\mathbf{h})}
  \abs{\sum_{n\leq M} e\left(h_1f(n+d_1)+\cdots+h_sf(n+d_s)\right)}.
\end{gather}

We concentrate on the exponential sum. For fixed
$\mathbf{0}\neq\mathbf{h}=(h_1,\ldots,h_s)\in[-H,H]^s\cap\Z^s$ we define
\[
  g(\mathbf{h},n)=\sum_{j=1}^{s}h_j f(n+d_j).
\]

By Taylor expansion of $f(n+d)$ around $d=0$ we have
\[
  f(n+d)=f(n)+d
  f'(n)+\frac{d^2}{2}f''(n)+\cdots+\frac{d^{m}}{m!}f^{(m)}(n)+\mathcal{O}\left(d^{m+1}
  f^{(m+1)}(n)\right).
\]
Therefore
\[
  g(\mathbf{h},n)=\sum_{k=0}^m
  \sum_{j=0}^{s}\frac{h_jd_j^k}{k!}f^{(k)}(n)+
  \mathcal{O}\left(\sum_{j=1}^s
  h_jd_j^{m+1}f^{(m+1)}(n)\right).
\]

Now we want to show that the coefficients of $g(\mathbf{h},n)$ cannot be all
zero. For $\mathbf{0}\neq\mathbf{h}=(h_1,\ldots,h_s)\in[-H,H]^s\cap\Z^s$ and
$k\geq0$ an integer we define
\begin{displaymath}
  L_k(\mathbf{h})=h_1d_1^k+\cdots+h_sd_s^k.
\end{displaymath}
Clearly the dominant term of $g(\mathbf{h},n)$ is the smallest $k$
such hat $L_{k}(\mathbf{h})\neq0$. We show that
there always exists such a $k\in\{0,\ldots,s-1\}$.
Suppose on the contrary that
\[
  L_0(\mathbf{h})=\cdots=L_{s-1}(\mathbf{h})=0.
\]
This is a $s\times s$ linear system, whose coefficients form the Vandermonde
matrix over $(d_1,\ldots,d_s)$. Since $0\leq d_1<\cdots<d_s$ are all different
the only solution to this system is $\mathbf{h}=\mathbf{0}$, which we excluded
($\varphi(\mathbf{h})>0$). Therefore this yields a contradiction and for a given
$\mathbf{0}\neq\mathbf{h}\in[-H,H]^s\cap\Z^s$ we set
\begin{displaymath}
  u(\mathbf{h}):=\min\{k\geq0\colon L_k(\mathbf{h})\neq 0\}\leq s-1
  .
\end{displaymath}
Furthermore we collect all $\mathbf{h}$ with $u(\mathbf{h})=u$ in a set
$\mathcal{H}_u$, \textit{i.e.}
\[
  \mathcal{H}_u:=\left\{\mathbf{0}\neq \mathbf{h}\in[-H,H]^s\cap\Z^s\colon u(\mathbf{h})=u\right\}.
\]
In view of \eqref{eq:correlation_koksma-szusz} this yields
\begin{gather}\label{eq:correlation_koksma-szusz-2}
  \abs{\sum_{n\leq M}e_{n+d_1}\cdots e_{n+d_s}}
  \ll M^{1-\frac1{10}}+\sum_{u=0}^{s-1}S_u, 
\end{gather}
where
\[
  S_u:=\sum_{\mathbf{h}\in\mathcal{H}_u}\frac{1}{\mathbf{r}(\mathbf{h})}\abs{\sum_{n\leq M} e\left(g(\mathbf{h},n)\right)}.
\]

We focus on the sum $S_u$ and therefore fix $u\in\{0,1,\ldots,s-1\}$ for the
moment. For $\mathbf{h}\in\mathcal{H}_u$ we have 
\[
  g(\mathbf{h},n)=\sum_{\ell=u}^m
  \frac{L_\ell(\mathbf{h})}{\ell!}f^{(\ell)}(n)+
  \mathcal{O}\left(L_{m+1}(\mathbf{h})f^{(m+1)}(n)\right).
\]
Trivially estimating the terms for $n < c_u \frac{M}{H}$ in the inner
sum of $S_u$ yields
\begin{gather}\label{eq:S_u-1}
  \abs{S_u}\leq \sum_{\mathbf{h}\in\mathcal{H}_u}\frac{1}{\mathbf{r}(\mathbf{h})}\abs{\sum_{c_u\frac{M}{H}\leq n\leq M} e\left(g(\mathbf{h},n)\right)}+\mathcal{O}\left(M^{1-\frac1{10}}(\log M)^s\right),
\end{gather}
where $c_u=\beta(\beta-1)\cdots(\beta-u+1)$. Now we split the
remaining sum over $n$ into $Q=\lfloor \log(H/2c_u)/\log 2\rfloor$
intervals of the form $]M2^{-q},M2^{-q+1}]$ and denote by
$]P_q,2P_q]$ a typical one of them. Again we may suppose that $P_q\geq
M^{\frac{9}{10}}$ and trivially estimating the sum otherwise.

Recall that by Lemma \ref{lem:higher_derivative_order} for integers
$\ell\geq1$ we have
\[
  \frac{f(x)}{x^\ell\log^2(x)}\ll\abs{f^{(\ell)}(x)}
  \ll
  \frac{f(x)}{x^\ell}.
\]
For $\beta=\inf\{c\in[0,\infty[\colon f(x)\prec x^c\}$ and arbitrary
$\varepsilon>0$ we obtain
\[
  x^{\beta-\varepsilon}\ll\abs{f(x)}\ll x^{\beta+\varepsilon}.
\]
Putting these two bounds together we get for any integer $\ell\geq 0$
\[
  x^{\beta-\ell-\varepsilon}\ll\abs{f^{(\ell)}(x)}
  \ll x^{\beta-\ell+\varepsilon}
  .
\]
Then for $r\geq0$ and $x\in]P_q,2P_q]$ we have
\[\lambda_r\ll\abs{\frac{\partial^r}{\partial x^r} g(\mathbf{h},x)}
\ll \alpha_r\lambda_r\]
with
\[
  \lambda_r=\abs{L_u(\mathbf{h})}P_q^{\beta-u-r-\varepsilon}
  \quad\text{and}\quad
  \alpha_r=P_q^{2\varepsilon}.
\]

We pick $w\in \R$ such that $\abs{L_u(\mathbf{h})}=P_q^w$. Since
$0<d_1<d_2<\cdots<d_s\leq N-M$ and $\abs{h_j}\leq H$ we have
\[
  1\leq \abs{L_u(\mathbf{h})}=\abs{\sum_{j=1}^sh_jd_j^u}
  \leq s H N^u\leq s M^{\frac{10u}{9}+\frac1{10}}\leq sP_q^{\frac{100u}{81}+\frac19}.
\]
Therefore $0\leq w\leq c_2$, where $c_2$ depends only on $\beta$.

Then we define
\[
  r:=\left\lceil w+\beta+\frac12\right\rceil
\]
and set $t:=\ell-\beta-w\in\left[\frac12,\frac32\right[$. Note that since $w$ is
bounded, we have $2\leq r\leq \ell$, where $\ell$ depends on $\beta$ only. Now an
application of Lemma \ref{lem:BKS} yields
\begin{align*}
  \sum_{P_q<n\leq 2P_q}e\left(g(\mathbf{h},n)\right)
  &\ll P_q\left[
    \left(\abs{L_u(\mathbf{h})}P_q^{\beta-u-r+\varepsilon}\right)^{\frac1{2R-2}}
    +\left(\abs{L_u(\mathbf{h})}P_q^{\beta-u-\varepsilon}\right)^{-\frac1{R}}
    +P_q^{-\frac1{R}+\varepsilon}
  \right]\\
  &\ll P_q^{1-\frac{u+t}{2R-2}+\varepsilon}
    +P_q^{1-\frac{\beta-u+w}{R}+\varepsilon}
    +P_q^{1-\frac{1}{R}+\varepsilon}\\
  &\ll P_q^{1-\frac{u+\frac12}{2L-2}+\varepsilon}
  +P_q^{1-\frac{\beta-u}{L}+\varepsilon}
  +P_q^{1-\frac{1}{L}+\varepsilon},
\end{align*}
where $R=2^{r-1}$ and $L=2^{\ell-1}$, respectively. Note that it is here, where we need that $\beta>s-1\geq u$.

Plugging this into \eqref{eq:S_u-1} and summing over all
intervals $]M2^{-q},M2^{-q+1}]$ we get
\begin{align*}
  S_u
  &\ll M^{1-\frac{u+\frac12}{2L-2}+\varepsilon}
  +M^{1-\frac{\beta-u}{L}+\varepsilon}
  +M^{1-\frac{1}{L}+\varepsilon}
  +M^{1-\frac1{10}+\varepsilon}.
\end{align*}
Inserting this estimate into \eqref{eq:correlation_koksma-szusz-2} yields
Theorem \ref{thm:correlation_measure_bound}.



\section{Lower bounds}


For this counterexample we assume that $0<c<1$ and $f(x)=x^c$. We will consider
the correlation of order $2$:
\[C_2(N)=\max_{M,d_1,d_2}\abs{\sum_{n=1}^M e_{n+d_1}e_{n+d_2}},\]
where the maximum runs over all $1\leq M\leq N$ and $0\leq d_1<d_2\leq N-M$.

Without loss of generality we set $d_1=0<d_2=1$ and $M=N-1$.
Writing
\begin{multline*}
  \chi\left(n^c\right) \chi\left((n+1)^c\right)
  -
  A_H\left(n^c\right) A_H\left((n+1)^c\right)
  \\
  =
  \chi\left(n^c\right)
  \left( \chi\left((n+1)^c\right) - A_H\left((n+1)^c\right) \right)
  +
  \left( \chi \left(n^c\right) - A_H \left(n^c\right) \right)
  \chi\left((n+1)^c\right)
  \\
  -
  \left( \chi \left(n^c\right) - A_H \left(n^c\right) \right)
  \left( \chi\left((n+1)^c\right) - A_H\left((n+1)^c\right) \right)
  ,
\end{multline*}
since $\abs{\chi}=1$, by Lemma~\ref{lem:vaaler}
we get
\begin{align*}
  \abs{
  \chi\left(n^c\right) \chi\left((n+1)^c\right)
  -
  A_H\left(n^c\right) A_H\left((n+1)^c\right)
  }
  \leq
  B_H \left(n^c\right)
  +
  B_H\left((n+1)^c\right)
  +
  B_H \left(n^c\right) B_H\left((n+1)^c\right)
\end{align*}
and by the definition of $e_n$ we get
\begin{align*}
  \sum_{n=1}^Me_{n}e_{n+1}
  &=\sum_{n=1}^M\chi\left(n^c\right)\chi\left((n+1)^c\right)\\
  &\geq \sum_{n=1}^M A_H\left(n^c\right)A_H\left((n+1)^c\right)
  -\sum_{n=1}^M B_H\left((n+1)^c\right)\\
  &\quad-\sum_{n=1}^M B_H\left(n^c\right)
  -\sum_{n=1}^M B_H\left(n^c\right)B_H\left((n+1)^c\right)
\end{align*}
In the sequel we will provide lower and upper bounds for the products
respectively. To this end let $H\geq1$ be an integer we choose later. By
definition we get for the first product
\begin{align}
  \label{eq:sum-AA}
  &\sum_{n=1}^M A_H\left(n^c\right)A_H\left((n+1)^c\right)\\
  \nonumber
  &\quad=
    \sum_{1\leq \abs{h_1}\leq H}
    \sum_{1\leq \abs{h_2}\leq H} a_{h_1}(H)a_{h_2}(H)
    \sum_{n=1}^M
    e\left(h_1n^c+h_2(n+1)^c\right)\\
    \nonumber
  &\quad=\sum_{1\leq \abs{h_1}\leq H}
  \sum_{\substack{1\leq \abs{h_2}\leq H\\ h_1+h_2\neq 0}} a_{h_1}(H)a_{h_2}(H)
  \sum_{n=1}^M e\left(h_1n^c+h_2(n+1)^c\right)\\
  \nonumber
  &\quad\quad+\sum_{0<\abs{h}\leq H}a_{h}(H)a_{-h}(H)
    \sum_{n=1}^M e\left(h\left((n+1)^c-n^c\right)\right).
\end{align}

We concentrate on the first inner exponential sum. We assume slightly
more generally that $0\leq \abs{h_1} \leq H$, $0\leq \abs{h_2} \leq H$
with $h_1+h_2\neq 0$.
As above we dyadically split
the sum up into sub-sums over intervals $]2^{-q-1}M,2^{-q}M]$
for $0\leq q\leq Q$ for some $Q$ to be chosen later (with $2^Q\leq M$):
\begin{displaymath}
  \sum_{n=1}^M   e\left(h_1n^c+h_2(n+1)^c\right)
  =
  \sum_{1\leq n \leq P_Q}   e\left(h_1n^c+h_2(n+1)^c\right)
  +
  \sum_{q=0}^Q
  \sum_{P_q < n \leq 2 P_q}
  e\left(h_1n^c+h_2(n+1)^c\right)
  ,
\end{displaymath}
where $P_q=M/2^{q+1}$.
The first sum can be trivially bounded by $\frac{M}{2^{Q+1}}$.
Moreover we set
\[
  g(x)=h_1 x^c + h_2 (x+1)^c
\]
for short. Then in the case of $h_1+h_2\neq 0$,
for $0\leq q\leq Q$ and $\frac{M}{2^{q+1}} < x \leq \frac{M}{2^{q}}$
we have
\[
  c (2^{-q}M)^{c-1}
  \ll
  \abs{g'(x)}=\abs{h_1cx^{c-1}+h_2c(x+1)^{c-1}}
  \leq
  2 H c (2^{-q-1}M)^{c-1}
  \leq
  \frac12
\]
provided that we choose $H$ and $Q$ such that
\begin{displaymath}
  H=\floor{\frac{M^{(1-c)^2}}{4}}
\end{displaymath}
\begin{displaymath}
  Q = \floor{ \frac{\log\left(M (4 H)^{-1/(1-c)}\right)}{\log 2}}
  .
\end{displaymath}
An application of Lemma \ref{lem:Kusmin_Landau} yields
\begin{align*}
  \sum_{q=0}^Q
  \abs{
  \sum_{P_q < n \leq 2 P_q}
  e\left(h_1n^c+h_2(n+1)^c\right)}
  \ll_c \sum_{q=0}^Q 2^{q(c-1)}M^{1-c}
  \ll_c M^{1-c},
\end{align*}
where we have used that $c-1<0$.
It follows that for $0\leq \abs{h_1} \leq H$,
$0\leq \abs{h_2} \leq H$ with $h_1+h_2\neq 0$ we have
\begin{equation}
  \label{eq:exponential-sum-estimate-h1-h2}
  \abs{
    \sum_{n=1}^M   e\left(h_1n^c+h_2(n+1)^c\right)
  }
  \ll_c M^{1-c} + \frac{M}{2^{Q+1}}
  \ll_c  M^{1-c} + (4H)^{1/(1-c)}
  \ll_c  M^{1-c} 
  .
\end{equation}
Plugging this into the sum over all
$h_1+h_2\neq 0$ we get
\begin{multline*}
  \abs{
    \sum_{\substack{0<\norm{\mathbf{h}}_\infty\leq H\\ h_1+h_2\neq 0}}a_{h_1}(H)a_{h_2}(H)
    \sum_{n=1}^M e\left(h_1n^c+h_2(n+1)^c\right)
  }\\
  \ll M^{1-c}\left(
    \sum_{\substack{0<\norm{\mathbf{h}}_\infty\leq H\\ h_1+h_2\neq 0\\ h_1\neq0, h_2\neq0}}\frac{1}{\abs{h_1h_2}}+
    \sum_{0<h\leq H}\frac{1}{h}
  \right)
  \ll M^{1-c}(\log H)^2,
\end{multline*}
where we have used that $a_h(H)\asymp \frac1{\abs{h}}$
for $1\leq \abs{h}\leq H$.

Now we concentrate on the sum over those pairs $(h_1,h_2)$ with $h_1+h_2=0$.
As above we divide the exponential sum into dyadic intervals:
\[
  \sum_{n=1}^M e\left(h\left((n+1)^c-n^c\right)\right)
  =
  \sum_{0\leq q\leq Q}
  \sum_{P_q < n \leq 2 P_q}
  e\left(h\left((n+1)^c-n^c\right)\right).
\]
Expanding $(x+d)^c$ around $d=0$ yields
\[
  (x+d)^c=x^c+cdx^{c-1}+\mathcal{O}\left(d^2x^{c-2}\right).
\]
Thus
\[
  g(n)=h((n+1)^c-n^c)=chn^{c-1}+\mathcal{O}\left(h2^{(q+1)(2-c)}M^{c-2}\right)\quad\text{for }n\in[2^{-q-1}M,2^{-q}M].
\]
We have
\begin{align*}
  \MoveEqLeft
  \abs{
  \sum_{0\leq q\leq Q}
    \sum_{P_q < n \leq 2 P_q}
    e\left(h\left((n+1)^c-n^c\right)\right)
  }\\
  &=
    \abs{
    \sum_{0\leq q\leq Q}
    \sum_{P_q < n \leq 2 P_q}
    e\left(chn^{c-1}\right)
    }
    +\mathcal{O}\left(\sum_{0\leq q\leq Q}\abs{h}2^{(q+1)(1-c)}M^{c-1}\right)\\
  &=
    \abs{\sum_{P_Q<n\leq M} e\left(chn^{c-1}\right)}
    +\mathcal{O}\left(\abs{h}\right)
    =
    \abs{\sum_{n=1}^{M} e\left(chn^{c-1}\right)}
    +\mathcal{O}\left(\frac{M}{2^{Q+1}}\right)
    +\mathcal{O}\left(\abs{h}\right)
    .
\end{align*}
Together with an application of Lemma \ref{lem:robert} we get
\begin{align*}
  \abs{
  \sum_{0\leq q\leq Q}
    \sum_{P_q < n \leq 2 P_q}
    e\left(h\left((n+1)^c-n^c\right)\right)
  }
  \geq \frac{M}{8}+\mathcal{O}\left(\abs{h}\right).  
\end{align*}
Plugging this into \eqref{eq:sum-AA} we get
\begin{multline}
  \label{eq:sum-AA-lowerbound}
  \sum_{n=1}^M A_H\left(n^c\right)A_H\left((n+1)^c\right)\\
  \geq
  \frac{M}8 \sum_{0<h\leq H}a_{h}(H)a_{-h}(H)
  +\mathcal{O}\left(\sum_{0<h\leq H}\frac1{h}\right)
  +\mathcal{O}\left(M^{1-c}(\log H)^2\right)
  \gg M.
\end{multline}

Since $b_h(H)\asymp \frac1{H}$ for $0<\abs{h}\leq H$ we get along
similar lines using \eqref{eq:exponential-sum-estimate-h1-h2}
with $h_2=0$ and the definition of $B_H$ in Lemma~\ref{lem:vaaler}
that

\begin{align*}
  \sum_{n=1}^MB_H(n^c)
  &\ll \frac{M}{H} + M^{1-c},\\
  \sum_{n=1}^MB_H((n+1)^c)
  &\ll \frac{M}{H} + M^{1-c},\\
  \sum_{n=1}^MB_H(n^c)B_H((n+1)^c)
  &\ll
    \frac{M}{H^2} + 2 \frac{M}{H} + 2 M^{1-c}
    +
    \sum_{h_1+h_2=0} \frac{M}{H^2}
    +
    \sum_{h_1+h_2\neq 0} \frac{M^{1-c}}{H^2}
  \\
  &\ll
    \frac{M}{H} +  M^{1-c}
    .
\end{align*}

Combining this last upper bound with the lower
bound~\eqref{eq:sum-AA-lowerbound} we 
finally get that
\[
  C_2(N)\geq\sum_{n=1}^{N-1}e_{n}e_{n+1}\gg N.
\]

\section*{Acknowledgements}
The first author is supported by project ANR-18-CE40-0018 funded by the French
National Research Agency. The third author acknowledges support by project
I~4406-N funded by the Austrian Science Fund.

Major parts of this article were established when the three authors profited
from a research in residence stay at the Centre International de Rencontres
Mathématiques (CIRM), Marseille, France. The authors
thank the institution for its hospitality.


\end{document}